\documentclass[11pt,letterpaper,reqno]{amsart}
\usepackage{amsfonts}
\usepackage{amsmath}
\usepackage{amsthm, amscd}
\usepackage{mathtools}
\usepackage{amssymb}
\usepackage{mathrsfs}
\usepackage{graphicx}
\usepackage{caption}
\usepackage{subcaption}
\usepackage{float}
\usepackage{xypic}
\usepackage[abs]{overpic}
\usepackage[alphabetic]{amsrefs}
\usepackage{etex}
\usepackage{tikz-cd}
\usepackage{hyperref}
\hypersetup{
	colorlinks,
	linkcolor=[rgb]{0,0,0.7},
	urlcolor=[rgb]{0,0,0.4},
	citecolor=[rgb]{0.4,0.1,0}
}

\allowdisplaybreaks

\theoremstyle{plain}\newtheorem{Theorem}{Theorem}[section]
\theoremstyle{plain}
\theoremstyle{plain}\newtheorem{Lemma}[Theorem]{Lemma}
\theoremstyle{plain}\newtheorem{Definition}[Theorem]{Definition}
\theoremstyle{plain}\newtheorem{Proposition}[Theorem]{Proposition}
\theoremstyle{plain}
\theoremstyle{plain}
\theoremstyle{plain}\newtheorem*{Claim*}{Claim}
\theoremstyle{plain}\newtheorem*{Theorem*}{Theorem}
\theoremstyle{plain}\newtheorem*{Lemma*}{Lemma}
\theoremstyle{plain}

\theoremstyle{remark}\newtheorem{remark}[Theorem]{Remark}
\theoremstyle{remark}
\theoremstyle{remark}\newtheorem*{Notation*}{Notation}

\numberwithin{equation}{section}

\DeclareMathOperator{\Ima}{Im}

\DeclareMathOperator{\SO}{SO}

\DeclareMathOperator{\id}{id}

\DeclareMathOperator{\Ext}{Ext}

\DeclareMathOperator{\Homeo}{Homeo}

\DeclareMathOperator{\Wh}{Wh}

\author{Jianfeng Lin}
\address{Yau Mathematical Sciences Center, Tsinghua University, Beijing 100084, China}
\email{linjian5477@mail.tsinghua.edu.cn}
\author{Yi Xie}
\address{Beijing International Center for Mathematical Research, Peking University, Beijing 100871, China}
\email{yixie@pku.edu.cn}
\author{Boyu Zhang}
\address{Department of Mathematics, The University of Maryland at College Park, Maryland 20742, USA}
\email{bzh@umd.edu}

\title{Splitting spheres for $S^2$--links in $S^4$}

\begin{document}

\maketitle 

\begin{abstract}
Suppose $L\cup R$ is a two-component sphere link in $S^4$. We show that if the link admits a smooth splitting $3$-sphere, then there exist infinitely many smooth splitting $3$-spheres that are topologically non-isotopic. 
This generalizes a theorem of Tatsuoka from the unlink to arbitrary split links. 
In the course of the proof, we establish a general sufficient condition under which a connected sum of smooth $4$-manifolds admits infinitely many topologically non-isotopic splitting $3$-spheres.
This criterion may be of independent interest; in particular, it applies to all previously known examples of nonuniqueness of splitting $3$-spheres for positive-genus surface links.
\end{abstract}

\section{Introduction}

For a two-component split link $K_1\sqcup K_2\subset S^3$, the splitting
sphere is unique up to isotopy in the link complement. In contrast, recent
works by Hughes--Kim--Miller \cite{hughes2025non}, Tatsuoka
\cite{tatsuoka2026splitting}, and Kim--Nahm--Tatsuoka
\cite[Sections~3--4]{kim2026knotting} have shown that the analogous
uniqueness statement fails for certain families of surface links in
$S^4$.

In this paper, we prove that for \emph{every} smooth two-component sphere link, if it admits a smooth splitting $3$-sphere, then it
admits infinitely many smooth splitting $3$-spheres that are pairwise topologically non-isotopic. 
In the proof, we also establish a general sufficient condition for the nonuniqueness of splitting spheres in connected sums of $4$--manifolds, as stated in
Proposition \ref{intro_prop_topological_nontrivial_case} below. Although
our main theorem concerns sphere links, this criterion also applies to
all the positive-genus split surface links studied in \cite{hughes2025non} and
\cite[Sections~3--4]{kim2026knotting}.

Our main theorem is as follows.

\begin{Theorem}
\label{thm_main}
Let $L\cup R\subset S^4$ be a smooth two-component sphere link that admits a smooth splitting $3$-sphere.
Then there exists an infinite family
$\{\Sigma_i\}_{i\in\mathbb{N}}$ of smoothly embedded $3$-spheres in
$S^4\setminus (L\cup R)$ such that each $\Sigma_i$ separates $L$ from
$R$; moreover, if $i\neq j$, then there exists no topological ambient isotopy of $S^4$
 fixing a neighborhood of $L\cup R$ that takes
$\Sigma_i$ to $\Sigma_j$.
\end{Theorem}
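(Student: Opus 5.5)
Given a splitting sphere $\Sigma_0$, the complement $S^4\setminus \nu(L\cup R)$ decomposes as $X_L \natural$-type connected sum: cutting along $\Sigma_0$ gives $M = X_L \# X_R$, where $X_L = S^4\setminus\nu(L)$ and $X_R = S^4\setminus \nu(R)$, both with boundary $S^2\times S^1$ and $H_1 \cong \mathbb{Z}$ generated by a meridian. My plan is to produce the new splitting spheres by applying self-diffeomorphisms of $M$ (rel boundary) to $\Sigma_0$, and to distinguish them by a topological invariant. The candidate diffeomorphisms are ``point-pushing'' or ``sphere-sliding'' maps. Take the summing sphere $\Sigma_0$ and drag the connect-sum region of $X_R$ around a loop $\gamma$ in $X_L$ representing $m^k$, with $m$ the meridian of $L$. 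More concretely, $\Sigma_k$ is obtained from $\Sigma_0$ by tubing along a path that winds $k$ times around $L$. Equivalently, $\Sigma_k$ bounds in $S^4$ the ball containing $L$ union a neighborhood of an arc that winds around $L$. So $\Sigma_k$ is the boundary of a regular neighborhood of $R\cup \alpha_k$ for a suitable arc/tube construction. Alternatively, and more robustly, I would use the Dehn twist along $\Sigma_0$ composed with the generator of $\pi_1(\SO(4))$, or the ``barbell''-type diffeomorphism built from the $S^1$-family.

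The steps are as follows. (1) Set up the connected-sum picture, and invoke the paper's general criterion, Proposition~\ref{intro_prop_topological_nontrivial_case}, for connected sums $X_1\# X_2$. I expect its hypothesis to be something like: some summand has $\pi_1$ (or $H_1$) infinite, with a nontrivial meridian-type loop acting on $\pi_2$ or on the boundary in a nontrivial way. (2) Verify that hypothesis for $X_L$ and $X_R$. The fundamental group surjects onto $\mathbb{Z}$, and $H_1=\mathbb{Z}$ holds via Alexander duality regardless of knotting, so the required loop of infinite order exists. The boundary $S^2\times S^1$ gives an essential $S^2$, the belt sphere or meridian sphere of the link component, whose intersection pattern with the moved spheres gives an invariant. (3) Define an invariant of a splitting sphere $\Sigma$ up to topological isotopy fixing $\nu(L\cup R)$. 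I would take the ball $B$ bounded by $\Sigma$ containing $L$. Pick a fixed arc $\delta$ from $\partial\nu(L)$ to $\partial\nu(R)$. Then consider the element of $H_1$ or the equivariant intersection number in the universal abelian ($\mathbb{Z}$-) cover of $M$ between the lift of $\Sigma$ and the lifted boundary sphere $S^2\times pt\subset \partial \nu(R)$. In $\mathbb{Z}[t^{\pm1}]$-coefficients this should give a Laurent polynomial like $t^k$ or $1-t^k$ for $\Sigma_k$. This is a homotopy invariant rel boundary, hence a topological isotopy invariant. (4) Conclude that the $\Sigma_k$ are pairwise distinct.

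The main obstacle is step (3). A single-variable $H_1$ invariant may vanish, because $\Sigma_k$ is null-homologous and tubing along a loop may be undone by a homotopy. One needs an invariant that is not killed by $\pi_1$-ambiguity, i.e. the choice of basing arc. I would first try the $\mathbb{Z}[\mathbb{Z}\times\mathbb{Z}]$-valued equivariant intersection/linking with the two meridian spheres, using both meridians $t_L,t_R$, and normalize by units. If that fails, I would fall back on the relative homotopy class of $\Sigma$ in $\pi_3$ of the complement, detected via a Hopf-type invariant in the infinite cyclic cover.
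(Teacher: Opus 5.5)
There is a genuine gap, and it lies in a case your argument never separates out: $L$ and $R$ both topologically unknotted in $S_L$ and $S_R$.

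\textbf{The unknotted case.} Here $X_L$ and $X_R$ are homeomorphic to $S^1\times D^3$, so $\pi_2$ of each summand vanishes. Your sphere $S^2\times pt\subset\partial\nu(R)$ is therefore null-homotopic, and the hypothesis of Proposition~\ref{intro_prop_topological_nontrivial_case} fails. Moreover, $\pi_3(M)\cong\mathbb{Z}[F_2]$ is free of rank one on $[\Sigma_0]$.

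A self-homeomorphism $\phi$ of $M$ rel boundary acts on $\pi_3(M)$ bijectively and semilinearly, so $\phi_*[\Sigma_0]$ is again a module generator. Generators of this module are units, and $\mathbb{Z}[F_2]$ has only the units $\pm g$. So every sphere you can obtain by applying diffeomorphisms rel boundary to $\Sigma_0$ is homotopic to $\Sigma_0$. Consequently every invariant of the type in your step (3) (equivariant intersections, Laurent polynomials, $\pi_3$-classes) is constant on them.

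This case needs a non-homotopy-theoretic input. The paper takes the Budney--Gabai diffeomorphism $f$ of Theorem~\ref{thm_Budney_Gabai}, which is homotopic to the identity, has infinite order in $\pi_0\Homeo_\partial(S^1\times D^3)/\pi_0\Homeo_\partial(D^4)$, and becomes trivial after lifting to an $m$-fold cover. It implements $f$ along a loop representing $1*1\in\mathbb{Z}*\mathbb{Z}$ and runs Tatsuoka's argument:
\begin{itemize}
\item An isotopy from $f_\gamma^k(\Sigma_0)$ to $\Sigma_0$ would make $f_\gamma^k$ isotopic to some $g_1\# g_2$.
\item Capping off the other side shows each $g_i$ is nontrivial.
\item In the $m$-fold cover, the lift of $f_\gamma^k$ is trivial but contains a copy of $g_2$, and the rest caps off to a manifold homeomorphic to $S^4$. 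This forces $g_2$ to be trivial, a contradiction.
\end{itemize}

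\textbf{The knotted case.} Here too the plan does not go through as written.
\begin{itemize}
\item The essential sphere is exactly what drives the case split. One must prove that a parallel copy of a topologically knotted $L$ is not null-homotopic in $S_L\setminus\nu(L)$; the paper uses Swarup's criterion, Dunwoody's accessibility and Freedman's unknotting theorem for this. You simply assert it.
\item Your candidate diffeomorphisms do not move $\Sigma_0$. Pushing the summing ball around a loop ends with that ball mapped to itself. A Dehn twist along $\Sigma_0$, with or without the $\pi_1\SO(4)$ twist, is supported in a collar of $\Sigma_0$. ``Tubing along a path'' is either a finger move, hence an isotopy, or does not yield a sphere.
\item The intersection number in step (3) is not defined: a $3$-sphere and a $2$-sphere in a $4$-manifold meet in a $1$-manifold, not in isolated points.
\end{itemize}

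What works is a barbell diffeomorphism whose bells are two parallel copies of the essential sphere $S\subset X_1$ and whose bar runs through $X_2$ along some $\alpha$ with $\alpha^2\neq 1$. The $k$-th power changes the $\pi_3$-class of $\Sigma_0$ by $\pm k\Wh([S],[S]^{\alpha})\pm k\Wh([S],[S]^{\alpha^{-1}})$. This change is detected using three facts:
\begin{itemize}
\item the wedge decomposition of the universal cover (Lemma~\ref{lem_wedge_sum_homotopy});
\item torsion-freeness of $\pi_2$ of a $4$-manifold (Proposition~\ref{prop_4mfd_pi2_torsion}), so $[S]\otimes[S]$ has infinite order;
\item the two terms lie in different $\pi_2\otimes\pi_2$ summands because $\alpha\neq\alpha^{-1}$.
\end{itemize}
Your fallback idea of a ``$\pi_3$ / Hopf-type invariant'' points in this direction, but both the construction and the detection are missing.
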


Suppose $\Sigma$ is a smooth splitting $3$-sphere of $L\cup R$. Consider the closed $4$-manifolds obtained by cutting $S^4$ open along $\Sigma$ and gluing each boundary component with $D^4$. The resulting manifold has two connected components, and we let $S_L$ denote the component that contains $L$, and let $S_R$ denote the component that contains $R$. By the topological Schoenflies theorem, both $S_L$ and $S_R$ are smooth $4$--manifolds homeomorphic to $S^4$. We may view $L$, $R$ as sphere knots in $S_L$, $S_R$ respectively.

The proof of Theorem \ref{thm_main} is divided into two cases. In the case that $L$ and $R$ are topologically unknotted in $S_L$ and $S_R$ respectively, the result is deduced from
the following extension of Tatsuoka's theorem 
\cite{tatsuoka2026splitting}.

\begin{Proposition}
\label{intro_prop_topological_trivial_case}
Let $X_1$ and $X_2$ be smooth $4$-manifolds homeomorphic to
$S^1\times D^3$, and let $X=X_1\# X_2$ denote their (smooth) interior connected
sum. Then there exists an infinite family
$\{\Sigma_i\}_{i\in\mathbb{N}}$ of smoothly embedded $3$-spheres in $X$
such that, for each $i$, the closures of the two components of
$X\setminus\Sigma_i$ are diffeomorphic to
$X_1\setminus\mathring{D}^4$ and
$X_2\setminus\mathring{D}^4$, respectively. Moreover, the spheres
$\Sigma_i$ are pairwise inequivalent under topological ambient isotopy
of $X$ relative to $\partial X$.
\end{Proposition}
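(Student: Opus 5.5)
We would follow Tatsuoka's strategy for the unlink and produce new splitting spheres by applying "point-pushing"/"slide" diffeomorphisms, then distinguish them by a topological invariant. Fix the standard splitting sphere $\Sigma_0=\partial(X_1\setminus\mathring D^4)$ in $X=X_1\#X_2$. Since $\pi_1(X)\cong\mathbb Z * \mathbb Z=\langle a,b\rangle$, with $a$ and $b$ generating $\pi_1(X_1)$ and $\pi_1(X_2)$, we can drag the summand $X_2\setminus\mathring D^4$ (equivalently, the $3$-sphere $\Sigma_0$) around a loop representing $a^n$ in $X_1$. This is the analogue of a handle slide of the connected-sum tube over the $1$-handle of $X_1$. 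Concretely, we isotope the $4$-ball $B$ in $X_1$ (where the connected sum was performed) along a loop $\gamma_n$ representing $a^n$, and extend this to a diffeomorphism $\phi_n$ of $X$ rel $\partial X$. We then set $\Sigma_n=\phi_n(\Sigma_0)$. Because $\phi_n$ is a diffeomorphism, the closures of the two complementary pieces are automatically diffeomorphic to $X_1\setminus\mathring D^4$ and $X_2\setminus\mathring D^4$, and $\Sigma_n$ is smooth. Only smooth, interior-connected-sum structure is used here, so exotic $X_i$ cause no difficulty.

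To show that the $\Sigma_n$ are pairwise topologically non-isotopic rel $\partial X$, we would use an invariant that sees only the homeomorphism type. A topological isotopy rel $\partial$ taking $\Sigma_i$ to $\Sigma_j$ gives a homeomorphism $h$ of $X$ that is the identity on $\partial X$ and is homotopic to the identity rel $\partial X$, with $h(\Sigma_i)=\Sigma_j$. Then $h$ acts trivially on $\pi_1(X,*)$ for a basepoint $*\in\partial X$; here we may assume both boundaries are nonempty, with basepoints on $\partial X_1$ and $\partial X_2$ and a fixed arc $\alpha$ between them. For a splitting sphere $\Sigma$ bounding the $X_1$-side piece $P$, the image of $\pi_1(P)\to\pi_1(X,*_1)$ is a cyclic subgroup generated by a conjugate $g_\Sigma a g_\Sigma^{-1}$, with $g_\Sigma$ well defined up to right multiplication by powers of $a$. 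The same holds on the $X_2$ side using the transport along $\alpha$. For $\Sigma_n$, dragging the $X_2$ ball around $a^n$ conjugates the $X_2$-side subgroup by $a^n$ relative to the $X_1$-side one. The relative invariant is therefore the double coset $\langle a\rangle\, a^n\, \langle b\rangle$, or a closely related element of $\langle a\rangle\backslash \pi_1\,/\langle b\rangle$. This needs care, since naively $a^n$ lies in the trivial double coset.

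Because of that degeneracy, the naive double coset is not enough, and this is the step I expect to be the main obstacle. The fix is to keep track of more structure. The $X_1$-side subgroup is computed with basepoint $*_1$ in $P$, connected to $\partial X_1$ inside $P$, and this subgroup is unchanged. The $X_2$-side subgroup is computed via the arc $\alpha$, which crosses $\Sigma$ exactly once. After the drag, the correct invariant is the element $w\in\pi_1(X)$ read off from $\alpha$ pushed to intersect $\Sigma_n$ minimally; $w$ is defined modulo $\langle a\rangle$ on the left and $\langle b\rangle$ on the right only when the arc pieces are varied inside $P$ and its complement. In fact $\alpha$ concatenated with $\phi_n(\alpha)^{-1}$ gives the class $a^n b a^{-n}b^{-1}$-type commutator. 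So I would instead drag along the loop $\gamma=a b$-mixed words, or perform the drag in $X_1$ while recording the arc to $\partial X_2$, and obtain words $w_n=a^n b a^{-n}$. These lie in pairwise distinct double cosets $\langle a\rangle w\langle b\rangle$ for distinct $n\geq1$, by normal forms in the free product. Verifying this invariance under homotopy rel boundary, and if needed replacing it with a Tatsuoka-style computation via the equivariant intersection form or $H_3$ of the universal cover, is where I expect the real work to lie.
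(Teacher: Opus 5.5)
There is a genuine gap, at both ends of the argument. First, your construction does not move the sphere. Pushing the connected-sum ball $B\subset X_1$ around $\gamma_n$ and applying isotopy extension gives a diffeomorphism $\psi$ of $X_1$ rel $\partial X_1$ with $\psi(B)=B$. Since $\SO(4)$ is connected you can take $\psi|_B=\id$, which is what lets you glue with the identity on $X_2\setminus\mathring{D}^4$. Hence $\phi_n(\Sigma_0)=\psi(\partial B)=\Sigma_0$: point-pushing maps lie in the stabilizer of $\Sigma_0$. Correspondingly, $\phi_n(\alpha)\alpha^{-1}$ is a loop on the $X_1$-side, so $[\phi_n(\alpha)\alpha^{-1}]\in\langle a\rangle$ rather than a commutator. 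That is exactly the degeneracy you noticed.

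Second, and more seriously, your $\pi_1$-invariant takes the same value on \emph{every} splitting sphere of the required type, so no other choice of drag can rescue it. Suppose $\Sigma$ cuts $X$ into $P\supset\partial X_1$ and $Q\supset\partial X_2$ with $\pi_1(P)\cong\pi_1(Q)\cong\mathbb{Z}$. Since $a$ and $b$ are primitive in $F(a,b)$ and are carried by $\partial X_1$ and $\partial X_2$, the image of $\pi_1(P,*_1)$ is $\langle a\rangle$, and the image of $\pi_1(Q,*_2)$, transported along $\alpha$, is $\langle b\rangle$. Van Kampen along the simply connected $\Sigma$, moved to $*_1$, then says that $\{a,\,wbw^{-1}\}$ is a free basis of $F(a,b)$, where $w$ is your crossing element. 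By Nielsen's theorem (the kernel of $\operatorname{Aut}F_2\to\mathrm{GL}_2(\mathbb{Z})$ is the group of inner automorphisms), every basis containing $a$ has the form $\{a,\,a^kb^{\pm1}a^l\}$. Since $wbw^{-1}$ is conjugate to $b$, this forces $wbw^{-1}=a^kba^{-k}$, i.e.\ $w\in a^k\langle b\rangle$. So the double coset $\langle a\rangle w\langle b\rangle$ is always the trivial one, and words such as $w_n=a^nba^{-n}$ with $n\neq 0$ never occur.

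The underlying problem is that in this topologically trivial case no homotopy-theoretic invariant can work. The spheres the paper produces are $(f_\gamma)^k(\Sigma)$ with $f_\gamma$ homotopic to the identity rel $\partial X$, so they are pairwise homotopic. Your fallbacks ($H_3$ of the universal cover, equivariant intersection data) therefore see nothing either. The missing idea is to detect the spheres through mapping classes.

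The paper implements a Budney--Gabai barbell diffeomorphism $f$ of $S^1\times D^3$ along a canonically framed loop $\gamma$ representing $ab$. This $f$ has infinite order in $\pi_0\Homeo_\partial(S^1\times D^3)/\pi_0\Homeo_\partial(D^4)$, yet its lift to some connected $m$-fold cover is isotopic to the identity rel boundary. If $(f_\gamma)^k(\Sigma)$ were topologically isotopic to $\Sigma$ rel $\partial X$, then $(f_\gamma)^k$ would be isotopic rel $\partial X$ to a split map $g_1\# g_2$.

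Capping off the other summand with a $2$-handle and a $4$-handle produces a manifold homeomorphic to $S^4$, so the complement of a ball is a topological $D^4$. This shows that $g_i$, extended by the identity, agrees with $(f_{\gamma_i})^k$ in $\pi_0\Homeo_\partial(X_i)/\pi_0\Homeo_\partial(D^4)$, where it is nontrivial. Here the exotic smooth structures genuinely matter: one must place $\gamma_i$ in a collar on which a homeomorphism $X_i\cong S^1\times D^3$ is smooth.

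On the other hand, take the $m$-fold cover in which the $X_1$-side stays connected. There $\gamma$ lifts to a connected cover of itself, so the lift of $(f_\gamma)^k$ is isotopic to the identity. This forces $g_2$ to be trivial in $\pi_0\Homeo_\partial(X_2)/\pi_0\Homeo_\partial(D^4)$, which is a contradiction.
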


Here, we use $\mathring{D}^4$ to denote the interior of a smoothly embedded closed $4$-ball in a given smooth $4$-manifold. 

The proof of Proposition \ref{intro_prop_topological_trivial_case} follows the strategy of Tatsuoka's argument \cite{tatsuoka2026splitting}.
The main additional issue is that one needs to construct barbell diffeomorphisms compatible with the smooth structure on $X_1\# X_2$, while
retaining control, in the standard coordinates, of the corresponding
self-homeomorphisms of $S^1\times D^3$ that arise in Tatsuoka's
construction. 

Proposition \ref{intro_prop_topological_trivial_case} will be proved in Section \ref{sec_topological_trivial}.

\medskip

In the case that at least one component, say $L$, is topologically
nontrivial in $S_L$, Freedman's unknotting theorem and Swarup's
unknotting criterion imply that a parallel copy of $L$ is not
null-homotopic in $S^4\setminus L$. To treat this case, we establish the
following general nonuniqueness criterion for connected sums of
$4$-manifolds.

\begin{Proposition}
\label{intro_prop_topological_nontrivial_case}
Let $X_1$ and $X_2$ be connected, oriented, compact smooth $4$-manifolds with
nonempty boundary. Suppose that the interior of $X_1$ contains a smoothly
embedded $2$-sphere with trivial normal bundle whose inclusion into
$X_1$ is not null-homotopic. Suppose also that there exists an element
$a\in\pi_1(X_2)$ such that $a^2\neq 1$. Let
$X=X_1\# X_2$ denote their (smooth) interior connected sum. Then there exists an
infinite family $\{\Sigma_i\}_{i\in\mathbb{N}}$ of smoothly embedded
$3$-spheres in $X$ such that, for each $i$, the closures of the two
components of $X\setminus\Sigma_i$ are diffeomorphic to
$X_1\setminus\mathring{D}^4$ and
$X_2\setminus\mathring{D}^4$, respectively; moreover, the spheres
$\Sigma_i$ are pairwise inequivalent under topological ambient isotopy
of $X$ relative to $\partial X$.
\end{Proposition}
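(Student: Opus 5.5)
Starting from the standard splitting sphere $\Sigma_0=\partial(X_1\setminus\mathring{D}^4)$, we will produce $\Sigma_i=\phi^i(\Sigma_0)$, where $\phi$ is a diffeomorphism of $X$ supported in the interior. Since each $\Sigma_i$ is the image of $\Sigma_0$ under a diffeomorphism, the two sides of $\Sigma_i$ are automatically diffeomorphic to $X_1\setminus\mathring{D}^4$ and $X_2\setminus\mathring{D}^4$.

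\textbf{Construction of $\phi$.} We take $\phi$ to be a barbell diffeomorphism in the sense of Budney--Gabai. Let $S\subset X_1$ be the given essential sphere with trivial normal bundle. Push a parallel copy $S'$ into $X_1\setminus \mathring D^4$. In $X_2\setminus\mathring D^4$, choose an embedded circle representing $a$, and let $S''$ be the boundary of a small normal $3$-disk fiber to it; then $S''$ is a $2$-sphere linking this circle. Connect $S'$ and $S''$ by an arc crossing $\Sigma_0$ once. This gives an embedded barbell $\mathcal B$ whose two cuffs have trivial normal bundle. Implanting the standard barbell diffeomorphism along $\mathcal B$ produces $\phi$, which is supported in a neighborhood of $\mathcal B$. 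An alternative, if the barbell is awkward, is to use the diffeomorphism obtained by pushing the sphere $S''$ around the loop $a$ while dragging $S'$ along the arc.

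\textbf{The invariant.} To show the $\Sigma_i$ are pairwise topologically non-isotopic rel $\partial X$, we use Dax-type invariants of the spheres $\Sigma_i$ relative to $\Sigma_0$. Concretely, we compare the embeddings $\Sigma_0\hookrightarrow X$ and $\Sigma_i\hookrightarrow X$. They are homotopic, since $\phi$ is homotopic to the identity when the barbell's cuffs are spherical. The obstruction to isotopy is then measured by a self-intersection / Dax invariant of a generic homotopy, landing in a quotient of $\mathbb Z[\pi_1 X]$, or, for a $3$-sphere in a $4$-manifold, in a group built from $\pi_2(X)$ and $\pi_1(X)$. Since $\pi_1(X)=\pi_1(X_1)*\pi_1(X_2)$, a cleaner route is to pass to the cover and track the element $[S]\cdot(1-a)$, or a word like $[S]\,a\,[S]^{-1}$, in $\pi_2(X)$ as a $\mathbb Z[\pi_1]$-module. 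The claim to verify is that the invariant of $\Sigma_i$ is $i$ times a nonzero class $\theta\in \pi_2(X)/\!\sim$ built from $[S]$ and $a$. The relevant indeterminacy is a symmetry of the form $g\mapsto g^{-1}$ or a conjugation. The hypothesis $a^2\neq1$ is exactly what prevents $\theta$ from being killed by that symmetry, since $a\ne a^{-1}$. Essentialness of $S$ together with the free product structure ensures $\pi_2(X)$ contains $\mathbb Z[\pi_1X]\otimes_{\mathbb Z[\pi_1X_1]}\pi_2(X_1)$ freely, so $[S]$ translated by powers of $a$ stays nonzero. Because the invariant is defined via homotopy-theoretic data, namely a framed self-intersection count of a generic map $S^3\times I\to X\times I$, it is a topological isotopy invariant. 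Topological isotopy rel boundary can be approximated by a smooth concordance, or one can use the topological transversality available in dimension $4$.

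\textbf{Main obstacle.} The hardest step will be setting up the invariant so that it is honestly invariant under topological ambient isotopy, and then computing it for $\phi^i(\Sigma_0)$. The additivity $\phi^i\mapsto i\theta$ follows once the invariant is a homomorphism on the relevant mapping class group. The nonvanishing of $\theta$ reduces to a computation in $\mathbb Z[\pi_1 X]\otimes\pi_2$ modulo the involution. I would first try reducing to the known computation of Budney--Gabai's barbell invariant in $S^1\times D^3$-like situations, localizing near $S'\cup\text{arc}\cup S''$. I would then use the map $X\to X_1\vee X_2$ (up to homotopy) to detect $\theta$ in $\pi_2$ of the wedge, where the module structure is explicit.
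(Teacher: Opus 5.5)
There is a genuine gap, and it is in the construction of $\phi$. Your second cuff $S''$ is the boundary of a normal $3$-disk fiber $D$ to a circle $C\subset X_2$. But $C$ is not removed from $X$. So in $X$ the sphere $S''$ bounds the embedded $3$-ball $D$, whose interior can be (and, as you describe it, is) disjoint from $S'$ and from the bar.

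At the homotopy level this already kills the construction. The bar crosses $\Sigma_0$ once, so the only change to the class of $\Sigma_0$ is a Whitehead product $\Wh([S'],g\cdot[S''])$. That product vanishes because $[S'']=0$ in $\pi_2(X)$.

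In fact $\phi$ itself is trivial. Adding a neighborhood of $D$ to the barbell neighborhood turns $S^2\times D^2\natural S^2\times D^2$ into $S^2\times D^2$. There the barbell map becomes isotopically trivial, since the defining loop of embeddings becomes null once that cuff is filled. So all your $\Sigma_i=\phi^i(\Sigma_0)$ are isotopic to $\Sigma_0$.

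Your claim that $\phi$ is homotopic to the identity ``when the cuffs are spherical'' holds here only because of this degeneracy. It is false for barbells with essential cuffs, and that failure is exactly what a correct argument exploits. Your alternative of pushing $S''$ around $a$ is, as stated, the end of an ambient isotopy and is also trivial. The element $a$ has to enter through the bar, not through a cuff.

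The paper's construction does this as follows.
\begin{enumerate}
\item Both cuffs are parallel copies $S,S'$ of the essential sphere in $X_1\setminus\mathring D^4$. The bar runs from $S$ across $\Sigma_0$, around a loop representing $a$ in $X_2\setminus\mathring D^4$, and back across $\Sigma_0$ to $S'$.
\item Because the bar meets $\Sigma_0$ twice, the Budney--Gabai mid-ball property gives $[\phi^k(\Sigma_0)]=[\Sigma_0]\pm k\Wh([S],[S]^{a})\pm k\Wh([S],[S]^{a^{-1}})$ in $\pi_3(X)$. So the invariant lives in $\pi_3$, not in $\pi_2$ or a Dax-type group.
\item Since $X\simeq X_1\vee X_2\vee S^3$, its universal cover is a wedge of copies of $\tilde X_1$, $\tilde X_2$ and $S^3$ indexed by cosets. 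The splitting of $\pi_3$ of a wedge puts the two Whitehead terms in the $\pi_2(X_1)\otimes\pi_2(X_1)$ summands indexed by $\{1,a\}$ and $\{1,a^{-1}\}$.
\item The hypothesis $a^2\neq 1$ is exactly what makes these two summands distinct, so the terms cannot cancel. It is not a $g\mapsto g^{-1}$ indeterminacy of a Dax invariant.
\item You need infinite order, not just nonvanishing. This follows because $\pi_2$ of a $4$-manifold is torsion-free, and $x\otimes y$ has infinite order whenever $x$ and $y$ do.
\item An orientation/degree argument shows that if $\Sigma_0$ and $\phi^k(\Sigma_0)$ were homotopic as submanifolds, their classes would be $\pi_1$-conjugate.
\end{enumerate}
This yields pairwise non-homotopic spheres, which are a fortiori not topologically isotopic rel $\partial X$. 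The topological-invariance problem you flagged as the main obstacle never arises.
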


In fact, we will show that the spheres $\Sigma_i$ in Proposition \ref{intro_prop_topological_nontrivial_case} can be taken to be mutually non-homotopic in the sense of Definition \ref{defn_homotopy_S3}. The proof of Proposition \ref{intro_prop_topological_nontrivial_case} and the full argument deducing Theorem \ref{thm_main} from Proposition \ref{intro_prop_topological_nontrivial_case} when $L$ is topologically non-trivial will be given in Section \ref{sec_topological_nontrivial}.

\begin{remark}
    Additional constructions of non-isotopic $S^3$'s in $4$-manifolds are established in \cite{budney2019knotted, iida2024diffeomorphisms, konno2022exotic}. Related results on non-isotopic embedded $D^3$'s are given in \cite{watanabe2020theta}.
\end{remark}

\textbf{Acknowledgments:} 
We would like to express our sincere gratitude to David Gabai and Alison Tatsuoka for helpful discussions, and to Maggie Miller for earlier correspondence about Freedman's unknotting theorem. We thank Michael Albanese for explaining an argument by Daniel Ruberman to us on MathOverflow, which is used in the proof of Proposition \ref{prop_4mfd_pi2_torsion}. 

J. Lin is partially supported by  National Key R\&D Program of China 2025YFA1017500 and NSFC 12271281. 
Y. Xie is partially supported by NSFC 12341105.
B. Zhang is partially supported by NSF grants DMS-2540516, DMS-2405271, and a travel grant from the Simons Foundation.

\section{The topologically trivial case}
\label{sec_topological_trivial}

Recall that $S_L$, $S_R$ are smooth $4$-manifolds homeomorphic to $S^4$ defined below the statement of Theorem \ref{thm_main}.
This section proves Theorem \ref{thm_main} in the case when $L$ is topologically ambiently isotopic to a trivial knot in $S_L$, and $R$ is topologically ambiently isotopic to a trivial knot in $S_R$. Note that since $\pi_0\Homeo^+(S^4)$ is trivial, the topological isotopy class of an unknot in an oriented topological $S^4$ is well-defined.  The main step is the following proposition, which is a restatement of Proposition \ref{intro_prop_topological_trivial_case}. 

\begin{Proposition}
\label{prop_topological_trivial_case}Let $X_1$ and $X_2$ be smooth $4$-manifolds homeomorphic to
$S^1\times D^3$, and let $X=X_1\# X_2$ denote their (smooth) interior connected
sum. Then there exists an infinite family
$\{\Sigma_i\}_{i\in\mathbb{N}}$ of smoothly embedded $3$-spheres in $X=X_1\# X_2$
such that, for each $i$, the closures of the two components of
$X\setminus\Sigma_i$ are diffeomorphic to
$X_1\setminus\mathring{D}^4$ and
$X_2\setminus\mathring{D}^4$, respectively; moreover, the spheres
$\Sigma_i$ are pairwise inequivalent under topological ambient isotopy
of $X$ relative to $\partial X$.
\end{Proposition}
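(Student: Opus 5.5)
We follow the strategy of Tatsuoka's proof for $X_1=X_2=S^1\times D^3$. The new spheres are images of a fixed sphere under diffeomorphisms of $X$ supported near barbells. These barbells are chosen smoothly inside the given smooth structures. The invariant separating the resulting spheres is computed after passing to the standard topological model.

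\emph{Step 1: the basic sphere and a homeomorphism to the model.} Let $\Sigma_0\subset X$ be the connected-sum sphere, so that $X\setminus\Sigma_0$ has closures $X_1\setminus\mathring D^4$ and $X_2\setminus\mathring D^4$. Fix homeomorphisms $h_k\colon X_k\to S^1\times D^3$. They glue to a homeomorphism
\[
h\colon X\to Y:=(S^1\times D^3)\#(S^1\times D^3)
\]
taking $\Sigma_0$ to the standard splitting sphere $\Sigma_0^{\mathrm{std}}$. Topological isotopy rel $\partial$ is preserved by $h$, so it suffices to show that the spheres $h(\Sigma_i)$ are pairwise non-isotopic in $Y$.

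\emph{Step 2: barbell diffeomorphisms.} Each $X_k$ contains a smoothly embedded circle $\gamma_k$ generating $\pi_1(X_k)\cong\mathbb Z$. We may take $\gamma_k$ with a smooth tubular neighborhood, so that a neighborhood of $\gamma_k\cup\{\text{arc to } \partial X_k\}$ is a smooth copy of $S^1\times D^3$ minus something.

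Next, we realize the barbells used by Tatsuoka. Each barbell consists of two small linking $2$-spheres (meridians to arcs), joined by a bar that winds $n$ times around $\gamma_1$ and passes through $\Sigma_0$ into $X_2$. Its cuffs lie in small smooth balls, and its bar is a smooth arc in a prescribed homotopy class. Hence it embeds smoothly in $X$ regardless of exoticness: only the homotopy class of the bar matters, and this class is determined by $\pi_1(X)\cong\mathbb Z * \mathbb Z$.

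Let $\phi_n$ be the associated barbell diffeomorphism, supported in a neighborhood $\mathcal B_n$ of the barbell, and set $\Sigma_n=\phi_n(\Sigma_0)$. Since $\phi_n$ is a diffeomorphism of $X$ rel $\partial$, the complementary pieces of $\Sigma_n$ are diffeomorphic to those of $\Sigma_0$, which gives the required diffeomorphism types.

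\emph{Step 3: the invariant.} We compare $h\circ\phi_n\circ h^{-1}$ with Tatsuoka's model homeomorphism $\psi_n$ of $Y$. The key point is that $h$ may be isotoped, topologically rel $\partial$, to be smooth near the $1$-skeleton-type data. Concretely, we arrange that $h$ restricted to a neighborhood of the barbell is the standard smooth embedding. Topological embeddings of a regular neighborhood of a $2$-complex with this homotopy data into $Y$ are unique up to topological isotopy, by a light-bulb style argument in dimension $4$ with a good $\pi_1$ on the dual side, or simply by codimension considerations.

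Then $h(\mathcal B_n)$ is topologically isotopic to the standard barbell neighborhood in $Y$. Consequently $h\phi_nh^{-1}$ is topologically isotopic rel $\partial$ to $\psi_n$, and $h(\Sigma_n)$ is isotopic to $\psi_n(\Sigma_0^{\mathrm{std}})$.

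Tatsuoka's argument shows these spheres are pairwise not even homotopic rel $\partial$. Her invariant is algebraic: it is computed from the action on $\pi_3$ or on the equivariant intersection data in the universal cover. It therefore only depends on topological information, which gives the conclusion.

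\emph{Main obstacle.} The delicate step is Step 3: controlling $h$ near the barbell. Exotic $X_k$ need not contain a smooth standard $S^1\times D^3$ in a way compatible with $h$. I would first try to make $h$ a smooth embedding on a neighborhood of $\gamma_k\cup(\text{arc})$, which is possible since homeomorphisms can be isotoped to be smooth near $1$-complexes. The cuffs and bar then lie in that neighborhood together with small balls, where smoothing is automatic. If this fails, the fallback is to verify directly that Tatsuoka's homotopy-theoretic invariant of $h(\Sigma_n)$ depends only on the homotopy class of the barbell in $Y$.
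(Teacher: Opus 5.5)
There is a genuine gap in Step~3, exactly where the spheres are distinguished. You assert that the invariant is homotopy-theoretic (read off from $\pi_3$ or equivariant intersection data) and that the spheres are not even homotopic. This is false.

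Here $X\simeq S^1\vee S^1\vee S^3$, so $\pi_2(X)=0$ and $\pi_3(X)\cong\mathbb{Z}[\pi_1(X)]$ is free on $[\Sigma_0]$. Unlike the situation of Section~\ref{sec_topological_nontrivial}, no Whitehead-product terms are available. Worse, the diffeomorphisms that actually work are implementations of a Budney--Gabai map $f$ that is homotopic to the identity rel boundary (Theorem~\ref{thm_Budney_Gabai}(1)). So every $f_\gamma^k(\Sigma_0)$ is homotopic to $\Sigma_0$, and no homotopy invariant can separate them.

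What separates them is the non-homotopy-theoretic fact that $f^k\neq\id$ in $\pi_0\Homeo_\partial(S^1\times D^3)/\pi_0\Homeo_\partial(D^4)$. Turning that into a statement about spheres needs an argument your proposal does not contain:
\begin{enumerate}
\item Implement $f$ along a circle $\gamma$ representing $1*1\in\pi_1(X)\cong\mathbb{Z}*\mathbb{Z}$, with the canonical framing, and use the family $f_\gamma^k$ rather than a family of bars winding $n$ times around $\gamma_1$.
\item If $f_\gamma^k(\Sigma_0)$ were isotopic to $\Sigma_0$ rel $\partial X$, then $f_\gamma^k$ would be isotopic rel $\partial X$ to a glued map $g_1\# g_2$.
\item Capping $X_2$ off by a $2$- and a $4$-handle makes $\gamma$ isotopic to $\gamma_1$. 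This shows that the extension of $g_1$ to $X_1$, and symmetrically that of $g_2$ to $X_2$, is nontrivial modulo $\pi_0\Homeo_\partial(D^4)$.
\item In the $m$-fold cover where $X_1\setminus\mathring{D}^4$ has connected preimage, $\gamma$ lifts to a single circle. Property~(3) then makes the lift of $f_\gamma^k$ isotopic to the identity, which forces $g_2$ to be trivial, a contradiction.
\end{enumerate}

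Alternatively, you could keep your transport strategy and cite Tatsuoka's theorem for the standard model as a black box, in its topological form. But then your comparison step must be repaired.

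Uniqueness of topological embeddings of neighbourhoods of $2$-complexes ``by codimension considerations'' is not available. Such complexes have codimension $2$ in a $4$-manifold, and homotopic embedded $2$-spheres need not be isotopic. Your fallback instinct, making $h$ smooth near a $1$-complex, is the right one.

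The paper's proof achieves this without any smoothing theory. Since homeomorphisms of $3$-manifolds are isotopic to diffeomorphisms, one may take $h_k\colon X_k\to S^1\times D^3$ to be a diffeomorphism on a collar of $\partial X_k$ and place the implementing circle inside that collar. Conjugation by $h_k$ then carries the implemented map to an honest implementation of $f$ in the standard $S^1\times D^3$. If you transport along a circle passing through both summands, you would also need to place the connected-sum balls inside these collars.
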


We start by setting up some notation.
Since the smooth structure of every $3$--manifold is unique, the collar neighborhood of $\partial X_1$ must be diffeomorphic to the collar neighborhood of $\partial(S^1\times D^3)$. Hence we may attach a $2$-handle to $X_1$ to obtain a smooth manifold homeomorphic to $D^4$, and then attach a $4$-handle to obtain a smooth manifold homeomorphic to $S^4$. We use $\tilde{X}_1$ to denote the union of $X_1$ with the $2$-handle, and use $\hat{X}_1$ to denote the union of $X_1$ with the $2$, $4$-handles. Similarly, let $\tilde{X}_2$ denote the smooth manifold homeomorphic to $D^4$ that is obtained by attaching a $2$-handle to $X_2$, and let $\hat X_2$ denote the smooth manifold homeomorphic to $S^4$ obtained by attaching a $4$-handle to $\tilde{X}_2$. Let $\tilde{X} = \tilde{X}_1\#\tilde{X}_2$, $\hat{X} = \hat{X}_1\#\hat{X}_2$. It is clear that $\tilde{X}$ is homeomorphic to $S^3\times I$, and $\hat{X}$ is homeomorphic to $S^4$.

By the $4$-dimensional topological annulus theorem (\cite{quinn1982ends}, see also Theorem 4.1 of \cite{friedl2025foundations}), if $M$ is homeomorphic to $S^4$ and $B\subset M$ is a collared closed $4$-ball, then $M\setminus \mathring{B}$ is homeomorphic to the closed $4$-ball.

We invoke the following theorem of Budney--Gabai \cite{budney2025automorphism}:

\begin{Theorem}[Budney--Gabai]
\label{thm_Budney_Gabai}
    There exists a diffeomorphism $f:S^1\times D^3\to S^1\times D^3$ such that the following properties hold.
    \begin{enumerate}
        \item $f$ equals the identity near $\partial(S^1\times D^3)$ and is homotopic to the identity relative to $\partial(S^1\times D^3)$.
        \item $f^k\neq \id$ in $\pi_0\Homeo_\partial(S^1\times D^3)/\pi_0\Homeo_\partial(D^4)$ for all non-zero integers $k$.
        \item There exists a positive integer $m$, such that the lifting of $f$ to the connected $m$-fold cover of $S^1\times D^3$ is smoothly isotopic to the identity relative to boundary. 
    \end{enumerate}
\end{Theorem}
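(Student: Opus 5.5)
The statement is the theorem of Budney--Gabai \cite{budney2025automorphism}, so the paper simply cites it. If I had to reprove it, I would build $f$ as a \emph{barbell diffeomorphism} of $S^1\times D^3$ and detect it by its effect on the standard transverse $3$-ball $\Delta=\{pt\}\times D^3$. Write $\pi_1(S^1\times D^3)=\langle t\rangle$.

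\textbf{Construction and property (1).} Choose an embedded barbell in $S^1\times D^3$, meaning two cuff $2$-spheres joined by a bar. Arrange the cuffs to be unknotted and each to link a parallel copy of the core circle, so that the relevant arcs represent $t^p$ and $t^q$ for suitable integers $p,q$. Let $f$ be the implantation of the standard barbell diffeomorphism of $S^2\times D^2\natural S^2\times D^2$. By construction $f$ is supported in the interior, so it is the identity near the boundary. To see that $f$ is homotopic to the identity rel boundary, I would use that the barbell diffeomorphism of the model acts trivially on homotopy groups. Since $\pi_k(S^1\times D^3)=0$ for $k\ge 2$, obstruction theory reduces the question to a check on $\pi_1$ and on the $1$-skeleton, where $f$ is visibly the identity.

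\textbf{Detection and property (2).} Compute $f(\Delta)$. It is $\Delta$ modified by a tubing along the barbell, giving a knotted ball $\Delta_{p,q}$ with the same boundary. Apply to it the Budney--Gabai $W_3$/Dax-type invariant of neatly embedded $3$-balls rel boundary. This invariant takes values in a quotient of $\mathbb{Z}[t^{\pm1}]$, and on $\Delta_{p,q}$ it equals an explicit nonzero combination of monomials built from $t^{\pm p}$ and $t^{\pm q}$.

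The key properties of the invariant would be the following.
\begin{enumerate}
\item It is additive under composing the diffeomorphisms that move $\Delta$. Hence $f^k(\Delta)$ has $k$ times a nonzero value, which is nonzero for every $k\neq 0$.
\item It is unchanged by homeomorphisms supported in a ball. Such a ball can be shrunk and isotoped off $\Delta$, so these homeomorphisms fix $\Delta$ up to isotopy.
\item It is an invariant of \emph{topological} isotopy. I would argue this by defining it homotopy-theoretically, through a map of two-point configuration spaces or equivalently a count of double points of a generic homotopy. Either description can be computed after a topological isotopy, using that embeddings can be smoothed and that topological isotopies of a $3$-ball can be approximated.
\end{enumerate}

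\textbf{Property (3).} In the connected $m$-fold cover, $f$ lifts to the composition of $m$ disjoint lifted barbell diffeomorphisms. Take $m$ divisible by $p$ and $q$. Then in each lifted barbell the classes $t^p$ and $t^q$ become trivial in $\pi_1$ of the cover, so each cuff sphere bounds an embedded $3$-ball disjoint from the other barbell parts. A barbell diffeomorphism with a cuff bounding such a ball is smoothly isotopic to the identity rel boundary: shrink the cuff through the ball, which trivializes the model. Hence the lift is smoothly trivial.

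\textbf{Main obstacle.} The hard step is the topological invariance in (2). It requires an invariant of $3$-balls in $S^1\times D^3$ that is both computable on $\Delta_{p,q}$ and insensitive to non-smooth isotopies. I would first try the configuration-space definition, since it only uses the embedding up to homotopy through embeddings in the $C^0$ sense.
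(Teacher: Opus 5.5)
You have the right ingredients: an implanted barbell diffeomorphism, detection on $\Delta$ by the Budney--Gabai $W_3$ invariant, and the fact that a barbell diffeomorphism is isotopic to the identity once a cuff bounds a $3$-ball meeting the barbell only in that cuff. The paper does not reprove the theorem. It takes $f=\delta_k$ with $k\ge 4$ from Budney--Gabai and gets (3) for all \emph{sufficiently large} $m$ by tracking lifts of the barbell.

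\textbf{Property (3).} Your argument here is wrong. A covering projection is injective on $\pi_1$, so no class ``becomes trivial'' in the $m$-fold cover. The loop $t^p$ lifts to a closed loop exactly when $m\mid p$, and that is the \emph{bad} case: the lifted bar then runs back through the ball bounded by its own lifted cuff, so the cuff-shrinking criterion is unavailable. What trivializes the lift is the opposite phenomenon. Suppose $m$ is large compared with how far the barbell winds around the $S^1$-factor. Then the loops recording how the bar returns through the cuff balls do \emph{not} close up. Each lifted bar pierces only balls bounded by cuffs of \emph{other} lifts, so every lifted barbell has a cuff bounding a ball that meets that barbell only in the cuff. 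The $m$ lifted barbell diffeomorphisms can then be isotoped to the identity one at a time; overlaps between different lifts are harmless. Your condition ``$m$ divisible by $p$ and $q$'' neither expresses nor guarantees this. It allows $m=|p|$ when $|p|=|q|$. For $p=q=1$ it allows $m=1$, where your reasoning would show that $f$ itself is isotopic to the identity, contradicting (2).

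\textbf{Property (2).} This is not established either. The two facts carrying all the weight are asserted rather than proved: that $W_3(f(\Delta))$ is nonzero of infinite order, and that $W_3$ is invariant under \emph{topological} isotopy. The route you suggest for the second does not work as stated. $W_3$ is a three-point (degree-three embedding calculus) invariant, with values in a quotient of $\mathbb{Z}[t_1^{\pm1},t_2^{\pm1}]$ rather than of $\mathbb{Z}[t^{\pm1}]$. No two-point or double-point count is available here, because the double-point set of a generic map $D^3\to S^1\times D^3$ is $2$-dimensional. Nor can you ``approximate'' a topological isotopy by a smooth one. In dimension $4$, smooth embeddings that are topologically isotopic need not be smoothly isotopic. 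So topological invariance needs a construction of the invariant that makes sense for topological embeddings, and that is where the real work lies. Nonvanishing also depends on the choice of barbell (Budney--Gabai need $k\ge 4$), so ``suitable $p,q$'' hides genuine content. The paper imports all of (2) from Section 4 of Budney--Gabai.

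\textbf{Property (1).} Your argument is fine. In fact every self-homeomorphism of $S^1\times D^3$ fixing the boundary is homotopic to the identity rel boundary, since the relative cells have dimensions $3$ and $4$ and $S^1\times D^3$ is aspherical.
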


Here, if $X$ is a compact $4$-manifold, we use $\pi_0\Homeo_\partial(X)/\pi_0\Homeo_\partial(D^4)$ to denote the group of isotopy classes of self-homeomorphisms of  $X$ relative to the boundary, modulo the homeomorphisms that are supported in the interior of a topological $D^4$. For the statement of Property (3), note that if $M$ is a connected manifold with a non-empty boundary and $g$ is a self-homeomorphism of $M$ that is homotopic to the identity relative to $\partial M$, and if $M^\dagger$ is a covering space of $M$, then there is a unique self-homeomorphism $g^\dagger$ of $M^\dagger$ that fixes $\partial M^\dagger$ and lifts $g$. We call $g^{\dagger}$ the \emph{lifting} of $g$ to $M^\dagger$. 

\begin{remark}
    We briefly review how results in \cite{budney2025automorphism} yield Theorem \ref{thm_Budney_Gabai}. The map $f$ can be taken to be one of the $\delta_k$ diffeomorphisms in \cite[Section 3]{budney2025automorphism} with $k\ge 4$. Property (1) follows from the definition of $f$ and \cite[Proposition 2.6]{budney2025automorphism}. Property (2) follows from \cite[Section 4]{budney2025automorphism}. Property (3) holds for all sufficiently large $m$, and it follows from tracking the lifts of barbells and using \cite[Proposition 2.5]{budney2025automorphism}; this property is stated in the proof of \cite[Theorem 5.1]{budney2025automorphism}.
\end{remark}

For the rest of this section, let $f$ be a diffeomorphism given by Theorem \ref{thm_Budney_Gabai}.
If $\gamma: S^1\to X$ is a smooth framed embedding, then $\gamma$ extends to a smooth embedding of $S^1\times D^3$ in $X$, and the push-forward of $f$ defines a diffeomorphism on $X$. We denote the resulting diffeomorphism on $X$ by $f_\gamma$, and call it the \emph{implementation of $f$ by $\gamma$}.  The map $f_\gamma$ is well-defined up to smooth isotopy.

\begin{remark}
\label{rmk_implement_in_S1xD3}
Note that if we use $\gamma_0:S^1\to S^1\times D^3$ to denote the embedding of the core circle, then there are two framings of $\gamma_0$ up to homotopy, which yield two implementations of $f$ in $S^1\times D^3$. One implementation is isotopic to $f$, and the other one is isotopic to the conjugation of $f$ by a twist of $S^1\times D^3$ using the non-trivial element of $\pi_1\SO(4)$. Therefore, both implementations satisfy Properties (1) -- (3) of Theorem \ref{thm_Budney_Gabai}.
\end{remark}

Given an oriented embedded circle in $\tilde{X} = \tilde{X}_1\#\tilde{X}_2$, we define a canonical choice of normal framing on the circle which is invariant under isotopy. Since $\tilde{X}$ is homotopy equivalent to $S^3$, the classification of $\SO(4)$ bundles over $\tilde{X}$ is given by $\pi_2\SO(4)=0$, so the tangent bundle of $\tilde{X}$ is trivial.  
Fix a trivialization of $T\tilde{X}$ that is compatible with the orientation.
For each oriented embedded circle $\gamma:S^1\to \tilde{X}$, a trivialization of its normal bundle defines a loop in $\operatorname{O}(T\tilde{X})$ whose projection to $\tilde{X}$ is $\gamma$. Using the given trivialization of $T\tilde{X}$, such a loop projects to a loop in $\operatorname{O}(4)$. Since $\pi_1\SO(3)\cong \pi_1\SO(4)$,  there is a unique normal framing of $\gamma$ (up to homotopy) such that the corresponding loop is a contractible loop in $\SO(4)$. We define this to be the \emph{canonical framing} of $\gamma$. If $\gamma_1,\gamma_2$ are two oriented embedded circles that are smoothly isotopic to each other in $\tilde{X}$, then the isotopy takes the canonical framing of $\gamma_1$ to the canonical framing of $\gamma_2$ (up to homotopy). 

We have $\pi_1(X)\cong \mathbb{Z}*\mathbb{Z}$, where each $\mathbb{Z}$ factor is given by $\pi_1(X_i)$ ($i=1,2$). Take the base point of $X$ to be a fixed point on the standard splitting $S^3$ of $X$.
Take a smoothly embedded oriented circle $\gamma:S^1\to X$ that represents the element $1*1\in \mathbb{Z}*\mathbb{Z}$ in $\pi_1(X)$. 
Let $f_\gamma$ denote the implementation of $f$ by $\gamma$ with respect to the canonical framing.

Proposition \ref{prop_topological_trivial_case} is then immediately implied by the following result.

\begin{Proposition}
Let $X$ be as in Proposition \ref{prop_topological_trivial_case}, and let $\Sigma$ denote the standard splitting $S^3$ of $X$. Let $\gamma, f_\gamma$ be as above. Then for all integers $k$, the spheres $(f_\gamma)^k(\Sigma)$ are pairwise inequivalent under topological ambient isotopy of $X$ relative to $\partial X$. 
\end{Proposition}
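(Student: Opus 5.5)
The plan is to argue by contradiction. Suppose that for some $k\neq l$ there is a topological ambient isotopy of $X$ rel $\partial X$ carrying $(f_\gamma)^k(\Sigma)$ to $(f_\gamma)^l(\Sigma)$. Put $n=k-l\neq 0$. Composing the end of that isotopy with $(f_\gamma)^{-l}$, we get a homeomorphism $h$ of $X$, isotopic rel $\partial X$ to $(f_\gamma)^n$, with $h(\Sigma)=\Sigma$. Since $h$ preserves orientation and the two sides of $\Sigma$, and $\pi_0\Homeo^+(S^3)$ is trivial, a further isotopy supported in a collar of $\Sigma$ makes $h$ the identity near $\Sigma$. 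Then $h=h_1\cup h_2$, where $h_i$ is a homeomorphism of $X_i\setminus\mathring{D}^4$ fixing the boundary.

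Next we fill in one side. Attach the $2$-handle to $X_2$, and then a ball along the $S^3$ boundary that appears on the $X_2$ side, to obtain $X_1\#\tilde{X}_2\cup D^4$. By the topological annulus theorem and Freedman's work this is homeomorphic to $X_1$, hence to $S^1\times D^3$. The homeomorphism $h$ extends by the identity over the handle and the ball. Its $X_2$-side piece becomes a homeomorphism of a topological $D^4$ rel boundary, which is isotopic to the identity by the Alexander trick. Hence, modulo $\pi_0\Homeo_\partial(D^4)$, the extended map is $h_1\cup\id$.

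On the other side, $(f_\gamma)^n$ extends to this manifold as the implementation of $f^n$ along the image of $\gamma$. There $\gamma$ is homotopic to the core circle, since the generator of $\pi_1(X_2)$ dies, and homotopy implies isotopy for circles in a $4$-manifold. The canonical framing is carried to one of the two framings of the core. So by Remark \ref{rmk_implement_in_S1xD3} the extension is topologically isotopic to $f^n$ or to its twisted conjugate. The symmetric argument, filling the $X_1$ side, identifies $h_2\cup\id$ with $f^n$ in the corresponding $S^1\times D^3$.

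The main obstacle is that this yields no contradiction directly: it only says $f^n\equiv h_1\cup\id$ in $X_1$ modulo $D^4$-supported maps, and $h_1$ need not be trivial. To rule this out I would bring in Property (3) of Theorem \ref{thm_Budney_Gabai}, following Tatsuoka's covering argument. Take the cover of $X$ associated to $\pi_1(X)=\mathbb{Z}*\mathbb{Z}\to\mathbb{Z}/m$ that sends the generator of $\pi_1(X_2)$ to $1$ and the generator of $\pi_1(X_1)$ to $0$. In this cover the $X_1$ side lifts to $m$ disjoint copies and the $X_2$ side lifts to its $m$-fold cover. Since $h\simeq (f_\gamma)^n$ rel boundary is homotopic to the identity, it has a canonical lift. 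Then:
\begin{enumerate}
\item On each of the $m$ copies of $X_1\setminus\mathring{D}^4$, the lift of $h$ is a copy of $h_1$.
\item The lift of $\gamma$ is a loop that, after filling the $X_2$-cover by handles, becomes the $m$-fold core. By Property (3) and Remark \ref{rmk_implement_in_S1xD3}, the lift of $(f_\gamma)^n$ becomes trivial after filling the $X_2$-side cover.
\end{enumerate}
Comparing these two descriptions on one copy of $X_1$ should show that $h_1$ is trivial modulo $\pi_0\Homeo_\partial(D^4)$. We then get $f^n\equiv\id$, contradicting Property (2).

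The delicate points are making the lifts and fillings compatible with the smooth implementation and its framing, and checking that the $m$ copies of $h_1$ really detect $h_1$ rather than some product of copies of it. If this comparison fails, I would instead try filling the cover in the opposite way.
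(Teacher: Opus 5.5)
Your route is the paper's argument with the roles of $X_1$ and $X_2$ interchanged. The paper uses the cover in which $X_1\setminus\mathring{D}^4$ lifts connectedly and $X_2\setminus\mathring{D}^4$ lifts to $m$ copies, and gets the contradiction from $g_2$; you use the mirror cover and aim at $h_1$. The one step that is not yet a proof is the one you leave as ``should show'', together with your worry about detecting a product of copies of $h_1$. It is closed by the same capping trick you already use in the first half.

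In your cover, write $X^\dagger=X_1\# Y'$. Here $X_1$ is one chosen lifted copy, and $Y'$ is the connected sum of the other $m-1$ copies of $X_1$ with the connected $m$-fold cover of $X_2$. Each boundary component of $Y'$ is the $S^1\times S^2$ boundary of a piece homeomorphic to $S^1\times D^3$. So attaching $2$- and $4$-handles gives $\hat{Y}'$ homeomorphic to $S^4$, and $X_1\#\hat{Y}'\cong X_1$ by a homeomorphism that is the identity on $X_1\setminus\mathring{D}^4$.

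The lift of $h$ preserves the chosen copy, since that copy contains a fixed boundary component of $X^\dagger$, and it acts there as $h_1$. After extending by the identity over the handles, its restriction to the complement is a homeomorphism of $\hat{Y}'\setminus\mathring{D}^4\cong D^4$ fixing the boundary, which the Alexander trick removes. So the other copies of $h_1$ and the lift of $h_2$ never need to be analyzed. On the other hand, lifting the isotopy between $h$ and $(f_\gamma)^n$ shows this map is isotopic to the extended lift of $(f_\gamma)^n$, which is smoothly isotopic to the identity. Hence $h_1\cup\id$ is trivial in $\pi_0\Homeo_\partial(X_1)/\pi_0\Homeo_\partial(D^4)$, contradicting your first half.

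\textbf{Item (2).} No filling is involved, and the lifted $\gamma$ is not an ``$m$-fold core''; after capping the $X_2$ side it runs once through each copy of $X_1$. The correct reason is simpler. $\gamma$ represents the product of the two generators, which maps to $1\in\mathbb{Z}/m$. So the preimage of its tubular neighborhood is the connected $m$-fold cover of $S^1\times D^3$. Property (3) of Theorem \ref{thm_Budney_Gabai} with Remark \ref{rmk_implement_in_S1xD3} then shows directly that the lift of $f_\gamma$ is smoothly isotopic to the identity rel $\partial X^\dagger$.

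\textbf{Fallback.} Filling ``the opposite way'' must not mean isolating the connected side. There you only see the lift of $h_2$ to the $m$-fold cover of $X_2$. Since $h_2$ is equivalent to an implementation of $f^n$ along a generator, Property (3) makes that lift trivial anyway, so nothing is detected. The detected side must be one that lifts to $m$ disjoint copies, as in your plan and, mirrored, in the paper.

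\textbf{Smooth versus topological bookkeeping.} The paper avoids referring to the ``core circle'' of a possibly exotic $X_1$. It chooses $\varphi_1\colon X_1\to S^1\times D^3$ to be a diffeomorphism on a collar of $\partial X_1$. It then smoothly isotopes $\gamma$, inside the smooth manifold $X_1\#\tilde{X}_2$, to a circle $\gamma_1$ in that collar with $\varphi_1(\gamma_1)=S^1\times\{p\}$. Conjugation by $\varphi_1$ then turns the implementation along $\gamma_1$ into an implementation along $S^1\times\{p\}$.
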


\begin{proof}
Let $\varphi_1:X_1\to S^1\times D^3$ be a homeomorphism. By \cite[Theorem 6.3]{munkres1960obstructions} and \cite{vcernavskiui1969local}, every homeomorphism between smooth closed $3$--manifolds is topologically isotopic to a diffeomorphism. Therefore, we may choose $\varphi_1$ so that it is a diffeomorphism near the boundary. Let $\nu(\partial X_1)$ denote a collar neighborhood of $\partial X_1$ such that $\varphi_1|_{\nu(\partial X_1)}$ is a diffeomorphism. Let $\gamma_1:S^1\to X_1$ be an embedded circle such that its image is contained in $\nu(\partial X_1)$ and such that $\varphi_1\circ \gamma_1(S^1) = S^1\times \{p\}$ for some point $p\in D^3$.
Define $\gamma_2:S^1\to X_2$ similarly. Orient $\gamma_1$, $\gamma_2$ so that $[\gamma_1]+[\gamma_2] = [\gamma]\in H_1(X;\mathbb{Z})$. 

For $i=1,2$, fix a smoothly embedded $D^4$ in $X_i$ that is disjoint from $\nu(\partial X_i)$, and view $X$ as the gluing of $X_1\setminus \mathring{D}^4$ and $X_2\setminus \mathring{D}^4$ with this specific choice of $D^4$'s. 
Let $f_{\gamma_1}$, $f_{\gamma_2}$ denote the implementation of $f$ by $\gamma_1$, $\gamma_2$ respectively, with respect to the canonical framing. We view $f_{\gamma_i}$ as diffeomorphisms supported in $X_i\setminus D^4$. By Remark \ref{rmk_implement_in_S1xD3}, we know that for each $i=1,2$, the extension of $f_{\gamma_i}$ to $X_i$ by the identity is of infinite order in $\pi_0\Homeo_\partial(X_i)/\pi_0\Homeo_\partial(D^4)$.

Now we prove the desired result following the strategy of \cite{tatsuoka2026splitting}. Note that $(f_\gamma)^{k_1}(\Sigma)$ is topologically ambient isotopic to $(f_\gamma)^{k_2}(\Sigma)$ relative to $\partial X$ if and only if $(f_\gamma)^{k_1-k_2}(\Sigma)$ is topologically ambient isotopic to $\Sigma$ relative to $\partial X$. Therefore, we only need to show that if $k\neq 0$, then $(f_\gamma)^k(\Sigma)$ is not topologically ambient isotopic to $\Sigma$ relative to $\partial X$.

Suppose $(f_\gamma)^k(\Sigma)$ is topologically ambient isotopic to $\Sigma$ relative to $\partial X$. Then $(f_\gamma)^k$ is topologically isotopic relative to $\partial X$ to a homeomorphism that preserves the set $\Sigma$. Since $\Homeo^+(S^3)$ is connected, one may isotope so that the homeomorphism fixes $\Sigma$ pointwise. One may further isotope to thicken the fixed point set from $\Sigma$ to a neighborhood of $\Sigma$. As a result, there exist homeomorphisms $g_1, g_2$ of $X_1\setminus \mathring{D}^4$ and $X_2\setminus \mathring{D}^4$ that are the identity near the boundary, such that $(f_\gamma)^k$ is topologically isotopic relative to $\partial X$ to the gluing of $g_1$ and $g_2$. 

We introduce some additional notation.
If $h_1,h_2$ are self-homeomorphisms of $X_1\setminus\mathring{D}^4$ and $X_2\setminus\mathring{D}^4$ that are the identity near the boundary, we use $h_1\# h_2$ to denote the self-homeomorphism of $X$ obtained by gluing $h_1$ and $h_2$. 
If $M$ is a codimension-zero closed submanifold of $N$ and $h$ is a self-homeomorphism of $M$ fixing a neighborhood of the boundary, we use $\mathcal{E}^N_M(h)$ to denote the extension of $h$ from $M$ to $N$ by the identity map. 
If $h_1,h_2$ are two self-homeomorphisms of a compact $4$-manifold $M$, we write $h_1\sim h_2$ if they represent the same element in $\pi_0\Homeo_\partial(M)/\pi_0\Homeo_\partial(D^4)$. 

Then we have
\begin{align}
\mathcal{E}^{X_1\#\hat{X}_2}_{X_1\setminus \mathring{D}^4} \,(g_1) & =  g_1 \# \id_{\hat{X}_2\setminus \mathring{D}^4}\sim  g_1 \# \Big(\mathcal{E}_{X_2\setminus \mathring{D}^4}^{\hat{X}_2\setminus \mathring{D}^4}(g_2)\Big)
\nonumber
\\
& \sim \mathcal{E}_{X_1\#X_2}^{X_1\#\hat{X}_2}\,((f_\gamma)^k) 
 \sim \mathcal{E}_{X_1\setminus D^4}^{X_1\#\hat{X_2}}\,((f_{\gamma_1})^k).
\label{eqn_g1_fgamma1_on_connected_sum}
\end{align}
Here, the first $\sim$ relation holds because $\hat{X}_2\setminus \mathring{D}^4$ is homeomorphic to $D^4$. 
The second $\sim$ relation holds because $(f_\gamma)^k$ is topologically isotopic relative to $\partial X$ to $g_1\# g_2$. 
The last $\sim$ relation holds because $\gamma$ and $\gamma_1$ are homotopic and hence smoothly isotopic in $X_1\# \tilde{X}_2$, and the isotopy preserves the canonical framing.

Since every self-homeomorphism of $\partial D^4$ extends to a self-homeomorphism of $D^4$, there exists a homeomorphism from $X_1\#\hat{X_2}$ to $X_1$ that is the identity on $X_1\setminus \mathring{D}^4$. Therefore, \eqref{eqn_g1_fgamma1_on_connected_sum} implies
\begin{equation}
\label{eqn_g_1_equi_f_gamma1^k}
\mathcal{E}_{X_1\setminus \mathring{D}^4}^{X_1}(g_1) \sim \mathcal{E}_{X_1\setminus \mathring{D}^4}^{X_1}\,((f_{\gamma_1})^k).
\end{equation}
Similarly, 
\begin{equation}
\label{eqn_g_2_equi_f_gamma2^k}
\mathcal{E}_{X_2\setminus \mathring{D}^4}^{X_2}(g_2) \sim \mathcal{E}_{X_2\setminus \mathring{D}^4}^{X_2}\,((f_{\gamma_2})^k).
\end{equation}

Recall that we have a homeomorphism $\varphi_1:X_1\to S^1\times D^3$ such that the image of $\varphi_1\circ \gamma_1$ has the form $S^1\times \{p\}$, and that $\varphi_1$ is a diffeomorphism near the image of $\gamma_1$. 
Therefore, the homeomorphism
\[
\varphi_1\circ \mathcal{E}_{X_1\setminus\mathring{D}^4}^{X_1} (f_{\gamma_1}) \circ \varphi_1^{-1}: S^1\times D^3 \to S^1\times D^3
\]
is smoothly isotopic to the implementation of $f$ by $\varphi_1\circ\gamma_1$. 
By Theorem \ref{thm_Budney_Gabai} Property (2) and Remark \ref{rmk_implement_in_S1xD3}, we know that 
$\varphi_1\circ \mathcal{E}_{X_1\setminus\mathring{D}^4}^{X_1} (f_{\gamma_1})  \circ \varphi_1^{-1}$
is of infinite order in  $\pi_0\Homeo_\partial(S^1\times D^3)/\pi_0\Homeo_{\partial}(D^4)$. Therefore, $\mathcal{E}_{X_1\setminus \mathring{D}^4}^{X_1}\,((f_{\gamma_1})^k)$ is non-trivial in $\pi_0\Homeo_\partial(X_1)/\pi_0\Homeo_{\partial}(D^4)$.
By \eqref{eqn_g_1_equi_f_gamma1^k}, we know that $\mathcal{E}_{X_1\setminus \mathring{D}^4}^{X_1}(g_1)$ is nontrivial in $\pi_0\Homeo_\partial(X_1)/\pi_0\Homeo_{\partial}(D^4)$. 
Similarly,
$\mathcal{E}_{X_2\setminus \mathring{D}^4}^{X_2}(g_2)$ is nontrivial in $\pi_0\Homeo_\partial(X_2)/\pi_0\Homeo_{\partial}(D^4)$.

Let $X^\dagger$ be the normal $m$-fold cover of $X$ such that the preimage of $X_1\setminus \mathring{D}^4$ is connected and the preimage of $X_2\setminus \mathring{D}^4$ has $m$ components. See Figure \ref{figure_cover_connectes_sum_torus} for a schematic figure of $X^\dagger$. 
\begin{figure}
	\begin{overpic}[width=0.35\textwidth]{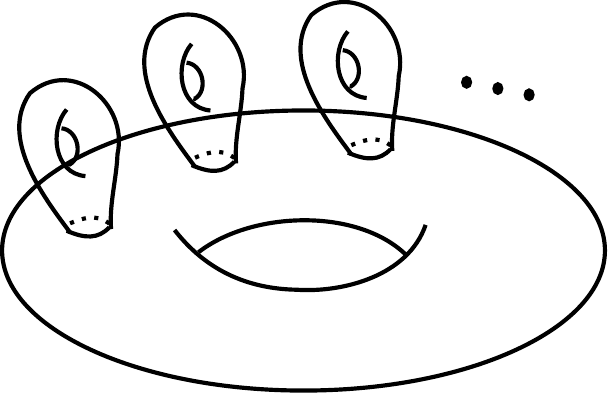}
	\end{overpic}
	\caption{A schematic figure of $X^\dagger$}
    \label{figure_cover_connectes_sum_torus}
\end{figure}
Let $m$ be given by Property (3) of Theorem \ref{thm_Budney_Gabai}, let $(f_{\gamma})^{k\,\dagger}$ be the lifting of $(f_{\gamma})^k$ from $X$ to $X^\dagger$. Note that the preimage of $\gamma$ in $X^\dagger$ is a connected $m$-fold cover of $\gamma$.  Therefore, $(f_{\gamma})^{k\,\dagger}$ is smoothly isotopic to the identity. On the other hand, $X^\dagger$ can be written as a connected sum $X_2\# Y$, where $Y$ is the connected sum of $(m-1)$ copies of $X_2$ and the $m$-fold connected covering of $X_1$. One may attach $2$, $4$-handles to $Y$ so that it becomes a smooth manifold $\hat Y$ homeomorphic to $S^4$. We have
\[
\id|_{X_2\# \hat Y} \sim \mathcal{E}_{X_2\# Y}^{X_2\# \hat Y} ((f_{\gamma})^{k\,\dagger}) 
\sim \mathcal{E}_{X_2 \setminus \mathring{D}^4}^{X_2\# \hat Y} (g_2).
\]
The first $\sim$ relation holds because $(f_{\gamma})^{k\,\dagger}$ is smoothly isotopic to the identity on $X^\dagger = X_2\# Y$ relative to boundary.
The second $\sim$ relation holds because $(f_\gamma)^k$ is topologically isotopic to $g_1\# g_2$ on $X$ relative to $\partial X$, which lifts to the gluing of $g_2$ with a self-homeomorphism of $\hat{Y}\setminus\mathring{D}^4$. Since $\hat{Y} \setminus\mathring{D}^4$ is homeomorphic to $D^4$, the second $\sim$ relation follows.

Since $\hat Y$ is homeomorphic to $S^4$, this implies $\mathcal{E}_{X_2 \setminus \mathring{D}^4}^{X_2} (g_2)$ is trivial in $\pi_0\Homeo_\partial(X_2)/\pi_0\Homeo_{\partial}(D^4)$, which contradicts the earlier result. 
\end{proof}

Proposition \ref{prop_topological_trivial_case} implies the main theorem in the case when $L$, $R$ are topologically trivial in $S_L$, $S_R$:

\begin{proof}[Proof of Theorem \ref{thm_main} when $L$, $R$ are topologically trivial in $S_L$, $S_R$]

    Let $\nu(L)$, $\nu(R)$ be smooth tubular neighborhoods of $L$, $R$. By the uniqueness theorem for topological tubular neighborhoods in dimension $4$ (see \cite[Theorem 5.6]{friedl2025foundations}), we know that $S_L\setminus \nu(L)$ and $S_R\setminus \nu(R)$ are both homeomorphic to $S^1\times D^3$. So the result follows from Proposition \ref{prop_topological_trivial_case}.
\end{proof}

\section{The topologically nontrivial case}
\label{sec_topological_nontrivial}
This section proves Theorem \ref{thm_main} in the case when $L$ is topologically non-trivial in $S_L$ or $R$ is topologically non-trivial in $S_R$. 

We will need the following result due to Daniel Ruberman. It was explained to the authors by Michael Albanese.
\begin{Proposition}[Ruberman]
\label{prop_4mfd_pi2_torsion}
    If $X$ is a $4$-manifold, then $\pi_2(X)$ has no torsion.
\end{Proposition}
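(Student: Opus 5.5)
The plan is to reduce to a homological statement about the universal cover and then use Poincaré duality together with a Bockstein argument. Without loss of generality $X$ is connected, since $\pi_2$ is computed at a basepoint. Let $\tilde X$ be the universal cover of $X$. Then $\pi_2(X)\cong\pi_2(\tilde X)$, and by the Hurewicz theorem $\pi_2(\tilde X)\cong H_2(\tilde X;\mathbb{Z})$ because $\tilde X$ is simply connected. So it suffices to prove the following claim: if $M$ is a simply connected $4$-manifold, possibly noncompact and possibly with boundary, then $H_2(M;\mathbb{Z})$ has no $p$-torsion for any prime $p$. We do not get to assume compactness, since $\tilde X$ is typically noncompact.

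First step: convert torsion into a Bockstein statement. Consider the coefficient sequence $0\to\mathbb{Z}\xrightarrow{p}\mathbb{Z}\to\mathbb{Z}/p\to 0$. Its long exact sequence shows that the $p$-torsion of $H_2(M;\mathbb{Z})$ is exactly the image of the Bockstein $\beta\colon H_3(M;\mathbb{Z}/p)\to H_2(M;\mathbb{Z})$. Hence it suffices to show that every class in $H_3(M;\mathbb{Z}/p)$ lifts to $H_3(M;\mathbb{Z})$, since then $\beta=0$.

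Second step: move to compactly supported cohomology. Since $M$ is simply connected, it is orientable. Poincaré--Lefschetz duality for noncompact manifolds gives isomorphisms $H_3(M;R)\cong H^1_c(M,\partial M;R)$ that are natural in the coefficient ring $R$, so they commute with reduction mod $p$. It therefore suffices to show that the reduction map $H^1_c(M,\partial M;\mathbb{Z})\to H^1_c(M,\partial M;\mathbb{Z}/p)$ is surjective. One can replace $M$ by its interior, which does not change the relevant groups, so I will ignore the boundary from here on.

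Third step, which I expect to be the main technical point: compute $H^1_c$ through the ends of $M$. Take the direct limit over compact sets $K\subset M$ of the long exact sequences of the pairs $(M,M\setminus K)$. This gives an exact sequence
\[
H^0(M;R)\to \varinjlim_K H^0(M\setminus K;R)\to H^1_c(M;R)\to H^1(M;R).
\]
Because $\pi_1(M)=0$, we have $H_1(M)=0$, and therefore $H^1(M;R)=\mathrm{Hom}(H_1(M),R)=0$. Hence $H^1_c(M;R)$ is the cokernel of the map $R\to\varinjlim_K H^0(M\setminus K;R)$. Each group $H^0(M\setminus K;R)$ is the group of $R$-valued functions that are constant on the components of $M\setminus K$. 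Any such function with values in $\mathbb{Z}/p$ lifts to a $\mathbb{Z}$-valued function of the same kind, so the reduction map $\varinjlim_K H^0(M\setminus K;\mathbb{Z})\to\varinjlim_K H^0(M\setminus K;\mathbb{Z}/p)$ is surjective. Passing to cokernels, $H^1_c(M;\mathbb{Z})\to H^1_c(M;\mathbb{Z}/p)$ is surjective.

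Conclusion: by the second step, every class in $H_3(M;\mathbb{Z}/p)$ lifts to an integral class, so the Bockstein vanishes and $H_2(M;\mathbb{Z})$ has no $p$-torsion for any $p$. Applying this to $M=\tilde X$ gives that $\pi_2(X)$ is torsion-free.

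The care needed is in two places. One is checking that duality for noncompact manifolds with boundary is natural with respect to coefficient reduction. The other is exchanging the direct limit with the long exact sequences, which is harmless because direct limits are exact.
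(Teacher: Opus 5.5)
Your argument is correct, but it takes a different route from the paper's. The paper identifies $\pi_2(X)\cong H_2(X;\mathbb{Z})\cong H^2_c(X;\mathbb{Z})=\varinjlim_K H^2(X,X\setminus K;\mathbb{Z})$. It then shows each finite stage is torsion-free: $H^2(X,X\setminus K;\mathbb{Z})$ is an extension of $\Ima(j)\subset H^2(X;\mathbb{Z})\cong\operatorname{Hom}(H_2(X),\mathbb{Z})$ by $H^1(X\setminus K;\mathbb{Z})\cong\operatorname{Hom}(H_1(X\setminus K),\mathbb{Z})$. Finally it uses that torsion in a direct limit already appears at some finite stage.

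You instead work one degree lower with mod $p$ coefficients. You turn elements of order $p$ in $H_2$ into non-liftable classes in $H_3(M;\mathbb{Z}/p)\cong H^1_c(M;\mathbb{Z}/p)$. Then you compute $H^1_c$ of a simply connected manifold as the cokernel of $R\to\varinjlim_K H^0(M\setminus K;R)$, i.e.\ purely from the ends, where lifting from $\mathbb{Z}/p$ to $\mathbb{Z}$ is obvious.

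The two arguments are equivalent through the exact sequence $H^1_c(M;\mathbb{Z})\to H^1_c(M;\mathbb{Z}/p)\to H^2_c(M;\mathbb{Z})\xrightarrow{p}H^2_c(M;\mathbb{Z})$. Running your Bockstein there, rather than in homology, would need only the integral duality $H_2\cong H^2_c$. That removes the naturality-in-coefficients check you flagged.

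The paper's version is slightly more economical: it uses integral coefficients only and never passes through $H_3$. Yours shows more clearly why torsion cannot occur: $H^1_c$ of a simply connected manifold sees only its ends, and $H^0$ of an open set is a product of copies of the coefficient group.
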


\begin{proof}
After removing the boundary of $X$ and taking the universal cover, we may assume without loss of generality that $X$ is simply connected and has no boundary. Hence $\pi_2(X)\cong H_2(X;\mathbb{Z})$. By Poincar\'e duality, 
\[
H_2(X;\mathbb{Z})\cong H^2_c(X;\mathbb{Z}) = \varinjlim H^2(X,X\setminus K;\mathbb{Z}),
\]
where the direct limit takes over all compact subsets $K\subset X$. If a direct limit of abelian groups contains torsion elements, then at least one of the groups contains torsion elements. We show that $H^2(X,X\setminus K;\mathbb{Z})$ has no torsion for all $K$.

Consider the long exact sequence
\[
H^1(X;\mathbb{Z})\to H^1(X\setminus K;\mathbb{Z})\to H^2(X,X\setminus K;\mathbb{Z}) \stackrel{j}{\to} H^2(X;\mathbb{Z}).
\]
By the universal coefficient theorem, we have a split short exact sequence
\[
0\to \Ext(H_{n-1}(X);\mathbb{Z})\to H^n(X;\mathbb{Z})\to \operatorname{Hom}(H_n(X;\mathbb{Z}),\mathbb{Z})\to 0.
\]
Since $X$ is simply connected, we have $H_1(X;\mathbb{Z})=0$, so $H^1(X;\mathbb{Z})=0$, and 
\[
H^2(X;\mathbb{Z})\cong \operatorname{Hom}(H_2(X;\mathbb{Z}),\mathbb{Z}).
\]
Since $\mathbb{Z}$ has no torsion, the group $H^2(X;\mathbb{Z})$ has no torsion. Similarly,
\[
H^1(X\setminus K;\mathbb{Z})\cong \operatorname{Hom}(H_1(X\setminus K;\mathbb{Z}),\mathbb{Z})
\]
has no torsion. Therefore, we have a short exact sequence
\[
0 \to H^1(X\setminus K;\mathbb{Z}) \to H^2(X,X\setminus K;\mathbb{Z}) \to \Ima(j) \to 0,
\]
where both $H^1(X\setminus K;\mathbb{Z})$ and $\Ima(j)$ have no torsion, so $H^2(X,X\setminus K;\mathbb{Z})$ has no torsion.
\end{proof}

We will also need the following lemma. This is a standard result, and similar isomorphisms were used in \cite{budney2019knotted, budney2025automorphism, lin2025mapping}; we give a proof here since we were unable to find a direct reference. 

\begin{Lemma}
\label{lem_wedge_sum_homotopy}
If $A_1,A_2$ are simply connected pointed CW complexes, then 
\[
\pi_2(A_1\vee A_2) \cong \pi_2(A_1)\oplus \pi_2(A_2),
\]
\[
\pi_3(A_1\vee A_2) \cong \pi_3(A_1)\oplus \pi_3(A_2) \oplus (\pi_2(A_1)\otimes \pi_2(A_2)).
\]
Moreover, the isomorphisms from the right-hand side to the left-hand side are given by sums of the maps $\pi_k(A_i)\to \pi_k(A_1\vee A_2)$ induced by inclusions and the Whitehead product 
\[
\Wh: \pi_2(A_1)\otimes \pi_2(A_2) \to \pi_3(A_1\vee A_2).
\]
\end{Lemma}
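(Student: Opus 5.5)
The plan is to reduce to the product and the fiber of the inclusion $A_1\vee A_2\hookrightarrow A_1\times A_2$, which is classical (Ganea): the homotopy fiber $F$ of this inclusion is homotopy equivalent to the join $\Omega A_1 * \Omega A_2\simeq \Sigma(\Omega A_1\wedge \Omega A_2)$. Since the inclusion has a one-sided inverse on homotopy groups (the maps $\pi_k(A_1\times A_2)=\pi_k(A_1)\oplus\pi_k(A_2)\to\pi_k(A_1\vee A_2)$ induced by the two inclusions give a splitting), the long exact sequence of the fibration breaks into split short exact sequences
\[
0\to \pi_k(F)\to \pi_k(A_1\vee A_2)\to \pi_k(A_1)\oplus\pi_k(A_2)\to 0 .
\]
So it suffices to compute $\pi_2(F)$ and $\pi_3(F)$, and to identify the generator of $\pi_3(F)$ with Whitehead products.

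Since $A_i$ is simply connected, $\Omega A_i$ is connected with $\pi_1(\Omega A_i)=\pi_2(A_i)$, abelian, so $H_1(\Omega A_i)\cong\pi_2(A_i)$ and $\tilde H_0=0$. Then $\Omega A_1\wedge\Omega A_2$ is $1$-connected with, by Künneth, $\tilde H_2(\Omega A_1\wedge\Omega A_2)\cong H_1(\Omega A_1)\otimes H_1(\Omega A_2)=\pi_2(A_1)\otimes\pi_2(A_2)$ (the Tor term involves $\tilde H_0$, which vanishes). Hence $F\simeq\Sigma(\Omega A_1\wedge\Omega A_2)$ is $2$-connected, and by Hurewicz $\pi_3(F)\cong H_3(F)\cong \pi_2(A_1)\otimes\pi_2(A_2)$, while $\pi_2(F)=0$. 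This gives both displayed isomorphisms abstractly. If one prefers to avoid Ganea, an alternative is to compute directly: $F$ has the homotopy type of the homotopy pushout description, or one can use that $A_1\times A_2/(A_1\vee A_2)=A_1\wedge A_2$ is $3$-connected... but only through degree $3$ relative Hurewicz gives $\pi_4(A_1\times A_2,A_1\vee A_2)\cong H_4(A_1\wedge A_2)\cong \pi_2(A_1)\otimes\pi_2(A_2)$ (using $H_2(A_i)\cong\pi_2(A_i)$, $H_1=0$), and $\pi_3(A_1\times A_2, A_1\vee A_2)=0$; the long exact sequence of the pair then yields the same result with $\pi_3(F)$ replaced by $\partial\pi_4(A_1\times A_2,A_1\vee A_2)$. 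I would use this relative Hurewicz route as the main argument since it needs only the Hurewicz theorem for the $3$-connected pair.

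The main obstacle is the last assertion: that the map $\pi_2(A_1)\otimes\pi_2(A_2)\to\pi_3(A_1\vee A_2)$ is the Whitehead product. For $\alpha\in\pi_2(A_1)$, $\beta\in\pi_2(A_2)$, the product $\alpha\times\beta:S^2\times S^2\to A_1\times A_2$ restricted to the $4$-cell gives an element of $\pi_4(A_1\times A_2,A_1\vee A_2)$ whose boundary is by definition $\Wh(\alpha,\beta)$ (the attaching map of the top cell of $S^2\times S^2$ is the universal Whitehead product). Its Hurewicz image in $H_4(A_1\wedge A_2)$ is the image of the fundamental class of $S^2\wedge S^2$, namely the cross product $h(\alpha)\times h(\beta)$, which corresponds to $\alpha\otimes\beta$ under the Künneth identification. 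By naturality and bilinearity this shows the boundary map composed with the inverse Hurewicz isomorphism is exactly $\Wh$, and injectivity of $\partial$ (since $\pi_4(A_1\times A_2)\to\pi_4(A_1\times A_2,A_1\vee A_2)$ is zero, by the splitting of $\pi_4(A_1\vee A_2)\to\pi_4(A_1\times A_2)$) finishes the proof; for $\pi_2$ the same sequence with $\pi_3(A_1\times A_2,A_1\vee A_2)=0$ gives the first isomorphism.
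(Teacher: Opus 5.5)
Your main argument is the paper's: the $3$-connected pair $(A_1\times A_2, A_1\vee A_2)$, relative Hurewicz plus Künneth giving $\pi_4(A_1\times A_2,A_1\vee A_2)\cong\pi_2(A_1)\otimes\pi_2(A_2)$, and the long exact sequence split by the inclusions $A_i\hookrightarrow A_1\vee A_2$. The paper gets $3$-connectivity of the pair by taking CW structures with no $1$-cells; you get it from $\tilde H_{\le 3}(A_1\wedge A_2)=0$ and relative Hurewicz, which also works because both spaces are simply connected. Your identification of $\partial$ with $\Wh$, via the characteristic map of the top cell of $S^2\times S^2$ and naturality of the cross product, is correct and makes explicit what the paper leaves as ``straightforward diagram chasing''; the Ganea detour is unnecessary but harmless.
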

\begin{proof}
    After replacing $A_1$, $A_2$ by homotopy equivalent CW complexes relative to the base points, we may assume without loss of generality that $A_1$, $A_2$ have no $1$-cells. Then $(A_1\times A_2,A_1\vee A_2)$ is $3$--connected, and hence
    \[
    \pi_2(A_1\vee A_2) \cong \pi_2(A_1\times A_2)\cong \pi_2(A_1)\oplus \pi_2(A_2).
    \]
    This proves the desired isomorphism on $\pi_2$.
    
    To prove the desired isomorphism on $\pi_3$, by the relative Hurewicz theorem and K\"unneth formula, we have 
 \[
\pi_4(A_1\times A_2, A_1\vee A_2)\cong H_4(A_1\times A_2, A_1\vee A_2)
\cong H_2(A_1)\otimes H_2(A_2). 
 \]
 By the  Hurewicz theorem, we have  
 $H_2(A_1)\otimes H_2(A_2)\cong \pi_2(A_1)\otimes \pi_2(A_2).$
Here, all homology groups are in $\mathbb{Z}$ coefficients. 

    Now consider the exact sequence 
    \begin{align*}
       \cdots \to \pi_4(A_1\times A_2)\to \pi_4(A_1\times A_2, A_1\vee A_2) \to \pi_3(A_1\vee A_2) 
       \\
       \to \pi_3(A_1\times A_2) \to \pi_3(A_1\times A_2, A_1\vee A_2) \to \cdots
    \end{align*}
    Since $\pi_k( A_1\vee A_2)\to \pi_k(A_1\times A_2)$ is surjective, we have a short exact sequence
    \[
0 \to \pi_{4}(A_1\times A_2,A_1\vee A_2)\to \pi_3(A_1\vee A_2)\to \pi_3(A_1\times A_2)\to 0,
    \]
    which admits a splitting 
    \[
\pi_3(A_1\times A_2) \cong \pi_3(A_1)\oplus \pi_3(A_2) \to \pi_3(A_1\vee A_2)
    \]
    defined by the sums of the maps $\pi_3(A_i)\to \pi_3(A_1\vee A_2)$ induced by the inclusions. So we have 
    \begin{align*}
\pi_3(A_1\vee A_2) &\cong \pi_3(A_1\times A_2)\oplus \pi_4(A_1\times A_2, A_1\vee A_2) 
\\
&\cong \pi_3(A_1)\oplus \pi_3(A_2)\oplus (\pi_2(A_1)\otimes \pi_2(A_2)).
    \end{align*}
    This proves the desired isomorphism for $\pi_3(A_1\vee A_2)$. The descriptions of the isomorphisms follow from straightforward diagram chasing. 
\end{proof}

The main result of this section is the following proposition, which is a stronger version of Proposition \ref{intro_prop_topological_nontrivial_case}.
\begin{Proposition}
\label{prop_topological_nontrivial_case} 
    Let $X_1$ and $X_2$ be connected, oriented, compact, smooth $4$-manifolds with
nonempty boundary. Suppose that the interior of $X_1$ contains a smoothly
embedded $2$-sphere $S$ with trivial normal bundle whose inclusion into
$X_1$ is not null-homotopic. Suppose also that there exists an element
$a\in\pi_1(X_2)$ such that $a^2\neq 1$. Let
$X=X_1\# X_2$. Then there exists an
infinite family $\{\Sigma_i\}_{i\in\mathbb{N}}$ of smoothly embedded
$3$-spheres in $X$ such that, for each $i$, the closures of the two
components of $X\setminus\Sigma_i$ are diffeomorphic to
$X_1\setminus\mathring{D}^4$ and
$X_2\setminus\mathring{D}^4$, respectively; moreover, the spheres
$\Sigma_i$ are non-homotopic to each other in $X$.
\end{Proposition}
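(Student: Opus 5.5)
We will build the spheres $\Sigma_i$ by the classical ``sphere-pushing'' or barbell-type construction. This construction changes the homotopy class of the splitting sphere by a Whitehead product. Let $\Sigma$ be the standard splitting sphere of $X=X_1\#X_2$, and fix a base point on $\Sigma$.

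\textbf{Construction.} Choose a smooth embedded loop $\alpha$ in $X_2\setminus\mathring D^4$, based near $\Sigma$, representing $a$. Tube the sphere $S\subset X_1$ along an arc into a neighborhood of $\Sigma$. Then perform the following modification of $\Sigma$, supported in a neighborhood of $\Sigma\cup S\cup\alpha$: drag a small $3$-disk of $\Sigma$ around the loop $\alpha$ (a point-pushing move), and simultaneously around the $2$-sphere $S$. Concretely, we use a ``barbell'' diffeomorphism $\phi$ of $X$ whose barbell has one cuff a normal $S^2$ to $\Sigma$'s arc near $S$ and the other cuff the sphere $S$ itself (available since $S$ has trivial normal bundle), with the bar running along $\alpha$. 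Set $\Sigma_i=\phi^i(\Sigma)$. Because $\phi$ is a diffeomorphism of $X$ rel $\partial X$, the complementary pieces of $\Sigma_i$ are diffeomorphic to those of $\Sigma$, namely $X_1\setminus\mathring D^4$ and $X_2\setminus\mathring D^4$. This gives the first assertion for free.

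\textbf{Homotopy invariant.} To distinguish the $\Sigma_i$ up to homotopy (in the sense of Definition \ref{defn_homotopy_S3}), pass to the universal cover $\tilde X$ and regard $[\Sigma_i]$ as an element of $\pi_3(X)\cong\pi_3(\tilde X)$, considered modulo the $\pi_1$-action and basepoint ambiguity. Near the relevant pieces, $\tilde X$ contains copies $\tilde X_1^{g}$ of the cover of $X_1\setminus D^4$ wedged along lifts of $\Sigma$. Use Lemma \ref{lem_wedge_sum_homotopy} on the wedge of the two lifted pieces adjacent to a chosen lift of the disk: the $\pi_2(A_1)\otimes\pi_2(A_2)$ summand detects the difference.

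The computation should give
\[[\Sigma_i]-[\Sigma]=i\big(\Wh([\tilde S],[\tilde S]\cdot a)-\Wh([\tilde S],[\tilde S]\cdot a^{-1})\big)\]
or a similar expression. This is an element of $\mathbb{Z}[\pi_1]\otimes$-type sums of $\pi_2$ tensor classes. The relevant classes are $[\tilde S]$ and its translates by $a^{\pm1}$. They lie in distinct wedge summands because $a\ne a^{-1}$, i.e.\ $a^2\neq1$; this is exactly where the hypothesis enters, since otherwise the two terms could cancel.

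\textbf{Nonvanishing.} The class $[\tilde S]\in\pi_2$ is nonzero because $S$ is not null-homotopic in $X_1$. Proposition \ref{prop_4mfd_pi2_torsion} ensures that $\pi_2$ is torsion-free. Hence $[\tilde S]\otimes[\tilde S]$ has infinite order in the tensor product, at least rationally, and the multiples for different $i$ are distinct. One must also check that these elements remain distinct after quotienting by the $\pi_1(X)$-action and by orientation reversal. This should follow by projecting to the specific tensor summand indexed by the pair of sheets $(1,a)$, which is preserved only by a controlled subgroup.

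\textbf{Main obstacle.} I expect the hardest step to be the precise computation of $[\phi^i(\Sigma)]$ in $\pi_3$ of the universal cover in terms of Whitehead products. This requires identifying a suitable wedge decomposition of (a piece of) $\tilde X$ to which Lemma \ref{lem_wedge_sum_homotopy} applies. It also requires verifying that the $a$ and $a^{-1}$ contributions land in independent summands. I would first do this computation in the model where $X_2$ is replaced by $S^1\times D^3$ and $X_1$ by a neighborhood of $S$, then push forward by inclusion, using that $\pi_2$ injects on the relevant summand.
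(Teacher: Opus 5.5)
Your skeleton is the paper's: $\Sigma_i=\phi^i(\Sigma)$ for a barbell diffeomorphism whose bar runs through $X_2$ along a loop representing $a$. The change in $\pi_3(X)$ is expressed by Whitehead products and detected in the wedge decomposition of the universal cover. The hypothesis $a^2\neq 1$ separates the $a$- and $a^{-1}$-terms, and Proposition \ref{prop_4mfd_pi2_torsion} with Lemma \ref{lem_tensor_nonvanish} gives infinite order.

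\textbf{The gap is the barbell itself: as written, it cannot produce your formula.} A cuff that is a normal (linking) $2$-sphere of an arc bounds a normal $3$-ball in $X$, so it is null-homotopic. Every Whitehead product with it vanishes in $\pi_3(X)$, and you would get $[\phi^i(\Sigma)]=[\Sigma]$ for all $i$. Both cuffs must represent $[S]$, and the bar must cross $\Sigma$ twice. Concretely:
\begin{itemize}
\item take a disjoint parallel copy $S'$ of $S$ (this is where the trivial normal bundle is used);
\item take arcs $\gamma,\gamma'$ in $X_1\setminus\mathring{D}^4$ from points $p,p'\in\Sigma$ to $S,S'$;
\item take an arc $\alpha$ in $X_2\setminus\mathring{D}^4$ from $p$ to $p'$ that closes up through $\Sigma$ to a loop representing $a$.
\end{itemize}
The barbell is $S\cup(\gamma^{-1}*\alpha*\gamma')\cup S'$. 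Its bar meets $\Sigma$ transversely exactly twice, and the small $3$-disks of $\Sigma$ around $p$ and $p'$ are both mid-balls of the barbell. By the proof of Budney--Gabai's Proposition 2.8, the barbell diffeomorphism changes each mid-ball by the Whitehead product of the two cuffs as seen from that mid-ball. This gives $\Wh([S],[S]^{a})$ from $p$ and $\Wh([S],[S]^{a^{-1}})$ from $p'$. That is exactly the computation you list as the main obstacle. The two terms come from the two crossings, not from dragging a disk around $\alpha$ and $S$, and no separate model computation is needed.

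\textbf{Two corrections in the detection step.} First, you point to the wrong summand. A lift of the disk is adjacent to one lift of $X_1\setminus\mathring{D}^4$ and one lift of $X_2\setminus\mathring{D}^4$, and their cross term $\pi_2(X_1)\otimes\pi_2(X_2)$ does not see these classes. The relevant summands pair the lift of $X_1$ indexed by the coset $\pi_1(X_1)$ with the non-adjacent lifts indexed by $\pi_1(X_1)a$ and $\pi_1(X_1)a^{-1}$. So you need the infinite-wedge version of Lemma \ref{lem_wedge_sum_homotopy}, applied to $\tilde X\simeq\bigvee_{I_1}X_1^\dagger\vee\bigvee_{I_2}X_2^\dagger\vee\bigvee_I S^3$ with $X_i^\dagger$ the universal covers, as in the paper.

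Second, projecting onto the $\{1,a\}$ summand alone does not exclude $[\Sigma_i]=\pm g[\Sigma_j]$, because $g=a$ carries the $\{1,a^{-1}\}$ summand onto the $\{1,a\}$ summand. The clean argument runs as follows.
\begin{enumerate}
\item Since $\phi$ is a homeomorphism, $\Sigma_i$ homotopic to $\Sigma_j$ implies $\phi^{k}(\Sigma)$ homotopic to $\Sigma$ with $k=i-j$.
\item By Lemma \ref{lem_homotopy_S3}, or simply allowing a sign, this gives $[\phi^{k}(\Sigma)]=\pm g[\Sigma]$ for some $g\in\pi_1(X)$.
\item Every $\pm g[\Sigma]$ lies in the $\bigoplus_I\mathbb{Z}$ summand, while $[\phi^k(\Sigma)]-[\Sigma]$ lies in the tensor summands. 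So the tensor part must vanish.
\item In particular $k\,[S]\otimes[S]=0$ in the $\{1,a\}$ summand, which forces $k=0$.
\end{enumerate}
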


Here, the concept of \emph{homotopy} for embedded submanifolds is defined as follows:
\begin{Definition}
\label{defn_homotopy_S3}
    If $\Sigma_1$, $\Sigma_2$ are two submanifolds of a manifold $M$, we say that $\Sigma_1$ is \emph{homotopic} to $\Sigma_2$ in $M$, if there exists a continuous map $h:\Sigma_1\times [0,1]\to M$ such that $h(x,0) = x$ for all $x\in \Sigma_1$, and $h|_{\Sigma_1\times \{1\}}$ maps $\Sigma_1\times \{1\}$ homeomorphically onto $\Sigma_2$. 
\end{Definition}

We have the following elementary lemma regarding the homotopy of splitting spheres.
\begin{Lemma}
\label{lem_homotopy_S3}
Suppose $X=X_1\# X_2$, where $X_1$ and $X_2$ are oriented connected topological $4$-manifolds with non-empty boundary. Let $\Sigma$ be the standard splitting sphere of $X$, and suppose $f:X\to X$ is a homeomorphism that is the identity on the boundary. Suppose $\Sigma$ is homotopic to $f(\Sigma)$. Then there exists a continuous map $h:\Sigma\times [0,1]\to X$ such that $h(x,0) = x$ and $h(x,1) = f(x)$ for all $x\in \Sigma$.
\end{Lemma}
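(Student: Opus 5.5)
We start from the homotopy $h_0:\Sigma\times[0,1]\to X$ given by Definition \ref{defn_homotopy_S3}, with $h_0(x,0)=x$ and $\psi:=h_0|_{\Sigma\times\{1\}}$ a homeomorphism $\Sigma\to f(\Sigma)$. Then $\phi:=f^{-1}\circ\psi$ is a self-homeomorphism of $\Sigma\cong S^3$. The plan has two parts. First, we show that $\phi$ is orientation preserving. Second, we use the connectedness of $\Homeo^+(S^3)$ to obtain an isotopy $\phi_t$ from $\phi$ to $\id_\Sigma$. Concatenating $h_0$ with the homotopy $(x,t)\mapsto f(\phi_t^{-1}\circ\phi(x))$ then gives the required map. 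This concatenation ends at $f\circ\phi\circ\phi^{-1}\circ\phi$ composed appropriately; more precisely, we reparametrize by precomposing with $\phi$. We therefore want a path in $\Homeo(\Sigma)$ from $\phi$ to $\id$, and $h_1(x,t)=f(\phi_t(x))$ followed after $h_0$ yields $h(x,1)=f(x)$.

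The main step is showing that $\phi$ preserves orientation, i.e.\ that $\psi$ and $f|_\Sigma$ have the same degree as maps $\Sigma\to f(\Sigma)$. To do this, we use the fundamental class $[\Sigma]\in H_3(X;\mathbb{Z})$. Since $h_0$ is a homotopy, $\psi_*[\Sigma]=[\Sigma]$ in $H_3(X)$. Since $f$ is homotopic to itself and is a homeomorphism fixing $\partial X$, we would like $f_*[\Sigma]=[\Sigma]$, and hence $\psi_*[\Sigma]=f_*[\Sigma]$. Comparing these in $H_3(X)$ then forces $\deg\phi=+1$.

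The difficulty is that $[\Sigma]$ might vanish in $H_3(X)$. For example, $\Sigma$ bounds $X_1\setminus\mathring D^4$ up to boundary components, so $[\Sigma]$ is homologous to $[\partial X_1]$, which can be zero in $H_3(X)$. To handle this, we work in $H_3(X,\partial X)$ or use an intersection argument. Choose an arc $\alpha$ in $X$ from a point of $\partial X_1$ to a point of $\partial X_2$ meeting $\Sigma$ transversally once. Since $f$ fixes $\partial X$, $f(\alpha)$ is an arc with the same endpoints, and $f(\alpha)\cdot f(\Sigma)=\alpha\cdot\Sigma=\pm1$. The arcs $\alpha$ and $f(\alpha)$ differ by a closed loop, and the algebraic intersection of any closed loop with the separating sphere $f(\Sigma)$ is zero. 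Hence $\alpha\cdot f(\Sigma)=\alpha\cdot\Sigma$, with orientation induced by $f$.

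On the other hand, the homotopy $h_0$ keeps the endpoints of $\alpha$ away from $h_0(\Sigma\times I)$, since $\Sigma$ is compact in the interior and we may assume $h_0$ avoids $\partial X$ after a small push. Thus $\alpha\cdot\psi(\Sigma)=\alpha\cdot\Sigma$, with orientation induced by $\psi$. Homotopy invariance of intersection numbers relative to the disjoint endpoints then gives $\deg\phi=1$. This is the step I expect to need the most care, namely pushing the homotopy off $\partial X$ and defining intersection numbers topologically. The usual way to make this rigorous is to use the linking/intersection pairing $H_3(X\setminus\partial X)\times H_1(X,\partial X)\to\mathbb{Z}$, or equivalently the class of $\Sigma$ in $H_3(X\setminus \{p_1,p_2\})$ for the two endpoints $p_1,p_2$.

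Finally, $\Homeo^+(S^3)$ is path connected, which follows from the Alexander trick together with the fact that orientation preserving homeomorphisms of $S^3$ are isotopic to the identity; the paper already uses this connectedness. This yields the path $\phi_t$ and completes the construction.
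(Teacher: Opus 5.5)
Your proposal is correct and is essentially the paper's argument: both rule out degree $-1$ for $h|_{\Sigma\times\{1\}}$ relative to $f|_\Sigma$ by comparing algebraic intersection numbers with an arc from $\partial X_1$ to $\partial X_2$, and then correct the end of the homotopy. The paper uses that degree classifies maps $S^3\to S^3$ and you use connectedness of $\Homeo^+(S^3)$; you should also state that $f$ preserves orientation (because it fixes $\partial X$), which is needed for $f(\alpha)\cdot f(\Sigma)=\alpha\cdot\Sigma$. Your worry that $[\Sigma]$ might vanish in $H_3(X)$ is unfounded: the pairing with the arc class in $H_1(X,\partial X)$ that you invoke gives $\pm 1$, so $[\Sigma]$ has infinite order, and this is exactly why the paper can argue directly with $f_*[\Sigma]\neq -[\Sigma]$ in $H_3(X)$.
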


\begin{proof}
Orient $\Sigma$ so that every arc from $\partial X_1$ to $\partial X_2$ intersects $\Sigma$ with algebraic intersection number $1$. 
Orient $f(\Sigma)$ so that the map $f|_{\Sigma}:\Sigma\to f(\Sigma)$ is orientation-preserving.
Since $f:X\to X$ is a homeomorphism that preserves the boundary, it must be orientation-preserving, and every arc from $\partial X_1$ to $\partial X_2$ intersects $f(\Sigma)$ with algebraic intersection number $1$. Let $[\Sigma], [f(\Sigma)]\in H_3(X;\mathbb{Z})$ be the fundamental classes of $\Sigma$ and $f(\Sigma)$. Then we have 
\[
[f(\Sigma)] = f_*([\Sigma])  \neq -[\Sigma].
\]

Suppose $h:\Sigma\times [0,1]\to X$ is a continuous map such that $h(x,0) = x$ for all $x\in \Sigma$, and $h|_{\Sigma\times \{1\}}$ maps $\Sigma\times \{1\}$ homeomorphically onto $f(\Sigma)$. The degree of $h|_{\Sigma\times \{1\}}$ is either $1$ or $-1$. If the degree of $h|_{\Sigma\times \{1\}}$ is $-1$, we would have 
\[
[f(\Sigma)] = -h_*([\Sigma]) = -[\Sigma],
\]
which yields a contradiction.

Therefore, the map $h|_{\Sigma\times \{1\}}$ is a homeomorphism from $\Sigma$ to $f(\Sigma)$ with degree $1$. Since maps between two $S^3$'s are homotopic if and only if they have the same degree, we may modify $h$ so that $h(x,0) = x$ and $h(x,1) = f(x)$ for all $x\in \Sigma$.
\end{proof}

Now we prove Proposition \ref{prop_topological_nontrivial_case}.
\begin{proof}[Proof of Proposition \ref{prop_topological_nontrivial_case}]
Identify $X$ with the gluing of $X_1\setminus \mathring{D}^4$ and $X_2\setminus \mathring{D}^4$. After isotopy of $S$, we may assume that $S$ is contained in the interior of $X_1\setminus \mathring{D}^4$. 
Let $\Sigma$ be the standard splitting sphere of $X_1\# X_2$.  Take a point $p$ on $\Sigma$, and take an embedded oriented arc $\gamma$ from $p$ to a point on $S$, such that $\gamma$ intersects $S$ and $\Sigma$ transversely at the endpoints, and that the interior of $\gamma$ is disjoint from $S\cup \Sigma$. Let $S',\gamma',p'$ be disjoint parallel copies of $S,\gamma,p$ that satisfy the same properties. Let $\alpha$ be an embedded arc in $X_2\setminus \mathring{D}^4$ from $p$ to $p'$, such that after connecting $p$ and $p'$ by an arc on $\Sigma$, the resulting loop represents an element $[\alpha]\in \pi_1(X_2\setminus \mathring{D}^4,p)\cong \pi_1(X_2,p)$ such that $[\alpha]^2\neq 1$. Note that $[\alpha]$ does not depend on the choice of the arc from $p$ to $p'$ because $\Sigma$ is simply connected. See Figure \ref{figure_parallel_sphere_barbell} for a schematic figure.

\begin{figure}
	\begin{overpic}[width=0.6\textwidth]{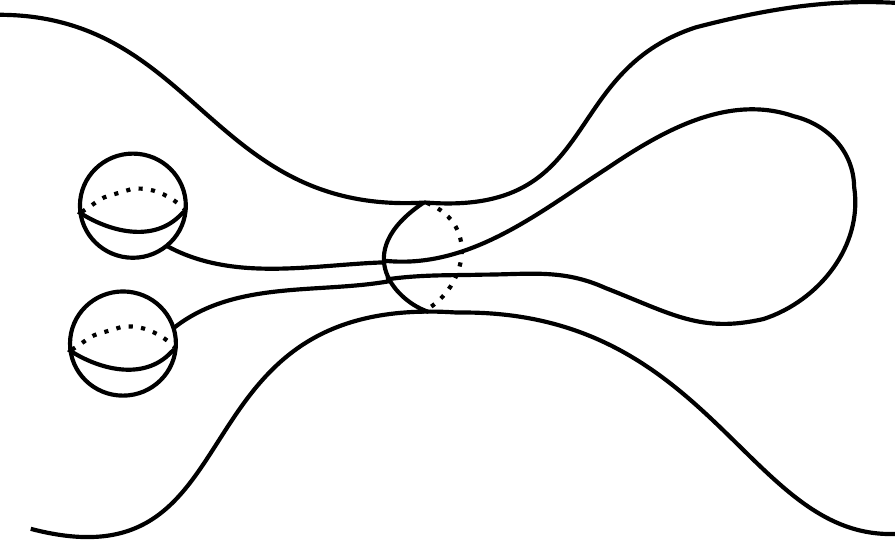}
    \put(50,10){$X_1$}
    \put(160,10){$X_2$}
    \put(100,40){$\Sigma$}
    \put(185,107){$\alpha$}
    \put(70,70){$\gamma$}
    \put(60,50){$\gamma'$}
    \put(12,88){$S$}
    \put(10,26){$S'$}
	\end{overpic}
	\caption{A schematic figure of $S$, $S'$, and the arcs}
    \label{figure_parallel_sphere_barbell}
\end{figure}

The arc $\beta = \gamma^{-1} * \alpha * \gamma'$ is an embedded arc that connects $S$ and $S'$, so $S\cup \beta\cup S'$ defines an implemented barbell diffeomorphism. Denote this diffeomorphism by $f$. We show that for distinct integers $k$, the images $f^k(\Sigma)$ are mutually non-homotopic to each other. For this, we only need to show that $\Sigma$ and $f^k(\Sigma)$ are non-homotopic when $k\neq 0$. 

By the proof of \cite[Proposition 2.8]{budney2025automorphism}, the barbell diffeomorphism is a self-diffeomorphism of $S^2\times D^2\natural S^2\times D^2$ relative to the boundary, such that the homotopy class of the mid-ball is changed by the Whitehead product of the two $S^2$ factors. 

Let $q\in \Sigma$ be a point that is disjoint from the neighborhood of $S\cup \beta\cup S'$ which supports the implemented barbell diffeomorphism $f$, and take $q$ to be the base point of the homotopy groups of $X$. By connecting $S$ to $p$ via $\gamma$ and connecting $p$ to $q$ via an arc on $\Sigma$, the sphere $S$ defines an element in $\pi_2(X_1\setminus\mathring{D^4},q)$. The element does not depend on the choice of the arc connecting $p$ to $q$ because $\Sigma$ is simply connected. Let $[S]\in \pi_2(X_1\setminus\mathring{D^4},q)$ denote this element. 
Since $\pi_2(X_1\setminus\mathring{D^4},q)\cong \pi_2(X_1,q)$, we may view $[S]$ as an element in $\pi_2(X_1,q)$. We will also abuse notation and use $[S]$ to denote its image in $\pi_2(X,q)$. 

Similarly, by connecting $p$ to $q$ with an arc on $\Sigma$, the arc $\alpha$ defines an element in $\pi_1(X_2\setminus\mathring{D}^4,q)$. We abuse notation and use $[\alpha]$ to denote this element as well as the corresponding elements in $\pi_1(X_2,q)$ and $\pi_1(X,q)$. 

Fix a parametrization of $\Sigma$ with base point $q$. Then $f^k(\Sigma)$ defines an element on $\pi_3(X,q)$, which we denote by $[f^k(\Sigma)]$. We have 
\begin{equation}
\label{eqn_fk(S)_homotopy}
[f^k(\Sigma)] = [\Sigma] \pm k \Wh([S],[S]^{[\alpha]}) \pm k \Wh([S],[S]^{([\alpha]^{-1})}),
\end{equation}
where the signs depend on the orientations of $S$ and $S'$. Here, $[S]^{[\alpha]}$ denotes the $\pi_1$-action of $[\alpha]$ on $[S]$, and $[S]^{([\alpha]^{-1})}$ denotes the $\pi_1$-action of $[\alpha]^{-1}$ on $[S]$, and $\Wh(-,-)$ denotes the Whitehead product.

By induction with Lemma \ref{lem_wedge_sum_homotopy} and taking direct limits, we know that if $\{A_i\}_{i\in I}$ is a possibly infinite collection of simply connected pointed CW complexes indexed by a set $I$, then  
\[
\pi_2(\vee_{i\in I} A_i)\cong \oplus_{i\in I} \pi_2(A_i),
\]
\[
\pi_3(\vee_{i\in I} A_i)\cong (\oplus_{i\in I} \pi_3(A_i))\oplus (\oplus_{\{i,j\}\subset I}\pi_2(A_i)\otimes \pi_2(A_j)),
\]
where the isomorphisms from the right-hand sides to the left-hand sides are the sums of maps induced by inclusions and Whitehead products. 

Since both $X_1,X_2$ have non-empty boundary, we have $X=X_1\# X_2 \simeq X_1\vee X_2\vee S^3$. Let $X_1^\dagger$, $X_2^\dagger$ be the universal covers of $X_1$ and $X_2$. Then the universal cover of $X$ is homotopy equivalent to the space 
\[
(\vee_{I_1} X_1^\dagger) \vee (\vee_{I_2} X_2^\dagger) \vee (\vee_{I} S^3),
\]
where 
\begin{align*}
    I_1 &= \pi_1(X_1)\backslash \pi_1(X_1)*\pi_1(X_2), \\
    I_2 &= \pi_1(X_2)\backslash \pi_1(X_1)*\pi_1(X_2), \\
    I &=\pi_1(X_1)*\pi_1(X_2),
\end{align*} 
and the backslashes denote taking right cosets. So we have 
\begin{align*}
\pi_3(X) \cong & (\oplus_{I_1} \pi_3(X_1)) \oplus (\oplus_{I_2} \pi_3(X_2)) \oplus (\oplus_{I} \mathbb{Z}) 
\\
 &\quad \oplus  (\oplus _{\{i,j\}\subset I_1} \pi_2(X_1)\otimes \pi_2(X_1)) \oplus (\oplus _{\{i,j\}\subset I_2} \pi_2(X_2)\otimes \pi_2(X_2)) 
 \\
 &\quad \oplus (\oplus_{i\in I_1,j\in I_2} \pi_2(X_1)\otimes \pi_2(X_2)).
\end{align*}
The element $[\Sigma]$ and its conjugations are in the component $\oplus_{I} \mathbb{Z}$.
The terms $\Wh([S],[S]^{[\alpha]})$ and $\Wh([S],[S]^{[\alpha]})$ both lie in the component 
\[
\oplus _{\{i,j\}\subset I_1} \pi_2(X_1)\otimes \pi_2(X_1).
\]
The term $\Wh([S],[S]^{[\alpha]})$ is in the summand given by $\{i,j\} = \{1,[\alpha]\}$, and $\Wh([S],[S]^{([\alpha]^{-1})})$ is in the summand given by $\{i,j\} = \{1,[\alpha]^{-1}\}$. Since $1\neq [\alpha]^2\in \pi_1(X_2)$, they are in different summands. By Proposition \ref{prop_4mfd_pi2_torsion}, $[S]$ has infinite order in $\pi_2(X_1)$. So by Lemma \ref{lem_tensor_nonvanish} below, $[S]\otimes [S]$ has infinte order in $\pi_2(X_1)\otimes \pi_2(X_1)$. As a result, a non-trivial linear combination of $\Wh([S],[S]^{[\alpha]})$ and $\Wh([S],[S]^{([\alpha]^{-1})})$ cannot equal $[\Sigma]\pm [\Sigma]^{a}$
 for $a\in \pi_1(X)$. By \eqref{eqn_fk(S)_homotopy}, we conclude that $[f^k(\Sigma)]$ is not $\pi_1$-conjugate to $[\Sigma]$ if $k\neq 0$.

By Lemma \ref{lem_homotopy_S3}, if $\Sigma$ and $f^k(\Sigma)$ are homotopic, then $[f^k(\Sigma)]$ and $[\Sigma]$ must be $\pi_1$-conjugate. Therefore, the proposition is proved. 
\end{proof}

Now we present the algebraic lemma that was used in the proof of Proposition \ref{prop_topological_nontrivial_case}.

\begin{Lemma}\label{lem_tensor_nonvanish}
    If $A,B$ are abelian groups, and $a\in A$, $b\in B$ both have infinite order, then $a\otimes b$ has infinite order in $A\otimes B$.
\end{Lemma}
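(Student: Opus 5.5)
The plan is to reduce to subgroups generated by $a$ and $b$ and use the flatness of $\mathbb{Q}$ (or of torsion-free groups) to control the tensor product.

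First, since $a$ has infinite order, the map $\mathbb{Z}\to A$, $1\mapsto a$, is injective. I would tensor with $\mathbb{Q}$. Since $\mathbb{Q}$ is flat over $\mathbb{Z}$, the induced map $\mathbb{Q}\to A\otimes\mathbb{Q}$ is injective. Hence $a\otimes 1\neq 0$ in $A\otimes\mathbb{Q}$, and likewise $b\otimes 1\neq 0$ in $B\otimes \mathbb{Q}$.

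Second, consider the natural homomorphism
\[
A\otimes B\to (A\otimes\mathbb{Q})\otimes_{\mathbb{Q}}(B\otimes\mathbb{Q}),\qquad x\otimes y\mapsto (x\otimes 1)\otimes(y\otimes 1).
\]
The target is a tensor product of $\mathbb{Q}$-vector spaces. In such a tensor product, the tensor of two nonzero vectors is nonzero: extend each to a basis and use the standard basis of the tensor product. So $n(a\otimes b)$ maps to $n\,(a\otimes1)\otimes(b\otimes1)$. For $n\neq 0$ this image is nonzero, because a nonzero rational multiple of a nonzero vector in a $\mathbb{Q}$-vector space is nonzero. Therefore $n(a\otimes b)\neq 0$ for every $n\neq 0$, which means $a\otimes b$ has infinite order.

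The only step requiring care is the first: injectivity of $\mathbb{Z}a\otimes\mathbb{Q}\to A\otimes\mathbb{Q}$, which is exactly the flatness of $\mathbb{Q}$. Equivalently, $A\otimes\mathbb{Q}$ is the localization $S^{-1}A$ with $S=\mathbb{Z}\setminus\{0\}$. In that description $a/1=0$ exactly when $ma=0$ for some $m\neq0$, and this is excluded since $a$ has infinite order. With this, the argument is complete and uses nothing beyond elementary linear algebra.
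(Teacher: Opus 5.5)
Your proof is correct and follows essentially the same route as the paper. Both arguments show $a\otimes 1\neq 0$ in $A\otimes_{\mathbb{Z}}\mathbb{Q}$; the paper uses the localization description, which you also give as an alternative to flatness. Both then map $a\otimes b$ into $(A\otimes_{\mathbb{Z}}\mathbb{Q})\otimes_{\mathbb{Q}}(B\otimes_{\mathbb{Z}}\mathbb{Q})$, where its image is a nonzero vector and hence has infinite order.
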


\begin{proof}
    View $A$, $B$ as $\mathbb{Z}$--modules. Since $\mathbb{Q}$ is the localization of $\mathbb{Z}$ by $\mathbb{Z}\setminus\{0\}$, we have $A\otimes_\mathbb{Z} \mathbb{Q}$ is the localization of $A$ by $\mathbb{Z}\setminus\{0\}$. Therefore, if $x\in A$, then $x\otimes 1$ vanishes in $A\otimes_\mathbb{Z}  \mathbb{Q}$ if and only if there exists $n\in \mathbb{Z}\setminus\{0\}$ such that $nx = 0$ in $A$. As a result, the assumption on $a$ implies that $a\otimes 1$ does not vanish in $A\otimes_\mathbb{Z}  \mathbb{Q}$. Similarly, the assumption on $b$ implies that $b\otimes 1$ does not vanish in $B\otimes_\mathbb{Z}  \mathbb{Q}$. Therefore, $(a\otimes 1)\otimes (b\otimes 1)$ does not vanish in $(A\otimes_\mathbb{Z}  \mathbb{Q})\otimes_\mathbb{Q} (B\otimes_\mathbb{Z} \mathbb{Q})$. Since the $\mathbb{Z}$--bilinear map 
    \begin{align*}
    A\times B &\to (A\otimes_\mathbb{Z}  \mathbb{Q})\otimes_\mathbb{Q} (B\otimes_\mathbb{Z}  \mathbb{Q}) \\
     (x,y) &\mapsto (x\otimes 1)\otimes (y\otimes 1)
    \end{align*}
    takes $(a,b)$ to an image with infinite order, we conclude that $a\otimes b$ must have infinite order in $A\otimes B$. 
\end{proof}

Now we finish the proof of the main theorem when one of the components, say $L$, is topologically non-trivial in $S_L$.

\begin{proof}[Proof of Theorem \ref{thm_main} when $L$ is topologically non-trivial in $S_L$]

        Let $\nu(L)$, $\nu(R)$ be smooth tubular neighborhoods of $L$, $R$. Let $L'$ be a parallel copy of $L$ in $S_L\setminus \nu(L)$, we claim that $L'$ must be non-contractible in $S_L\setminus \nu(L)$.  By the unknotting criterion of Swarup \cite{swarup1975unknotting}\footnote{The result of Swarup is stated for a PL embedding of a PL $S^{n-2}$ into a PL $S^{n}$. However, the only place that PL topology is needed is to take the PL tubular neighborhood, so the same result works verbatim for a PL embedding of a PL manifold homeomorphic to $S^{n-2}$ into a PL manifold homeomorphic to $S^n$.}, if $L'$ is contractible in $S_L\setminus \nu(L)$, and if $\pi_1(S_L\setminus \nu(L))$ is accessible, then $S_L\setminus \nu(L)$ is homotopy equivalent to $S^1$. Dunwoody \cite{dunwoody1985accessibility} later proved that all finitely presented groups are accessible. By Freedman's unknotting theorem (see \cite[Theorem 11.7A]{freedman2014topology}), if the complement of a locally flat sphere knot in $S^4$ has fundamental group $\mathbb{Z}$, then the knot is topologically ambient isotopic to the standard embedding. This yields a contradiction to the assumptions. As a consequence, $L'$ is not contractible in $S_L\setminus \nu(L)$. Since $H_2(S_L)=0$, the self-intersection number of $L$ must be zero, so the normal bundle of $L'$ is trivial.

    By Alexander duality, $H_1(S_R\setminus \nu(R)) \cong \mathbb{Z}$. So there exists $a\in \pi_1(S_R\setminus \nu(R))$ that has infinite order.

    Applying Proposition \ref{prop_topological_nontrivial_case} to $X_1 = S_L\setminus \nu(L)$ and $X_2 = S_R\setminus \nu(R)$ then yields the desired result. 
\end{proof}

\bibliographystyle{amsalpha}
\bibliography{references}

\end{document}